\pgfplotsset{compat=1.13}
 \newtheorem{thm}{Theorem}[section]
 \newtheorem{cor}[thm]{Corollary}
 \newtheorem{lem}[thm]{Lemma}{\rm}
 \newtheorem{assumption}[thm]{Assumption}
 \newtheorem{rem}[thm]{Remark}
\numberwithin{equation}{section}
\DeclareMathOperator{\vol}{vol}
\begin{document}

\def\red{\color{red}}
\def\bl{\color{blue}}
\def\ora{\color{orange}}
\def\green{\color{green}}
\def\br{\color{brown}}

\def\la{\langle}
\def\ra{\rangle}
\def\e{{\rm e}}
\def\x{\mathbf{x}}
\def\by{\mathbf{y}}
\def\bz{\mathbf{z}}
\def\F{\mathcal{F}}
\def\R{\mathbb{R}}
\def\T{\mathbf{T}}
\def\N{\mathbb{N}}
\def\K{\mathbf{K}}
\def\bK{\overline{\mathbf{K}}}
\def\Q{\mathbf{Q}}
\def\M{\mathbf{M}}
\def\O{\mathbf{O}}
\def\C{\mathbf{C}}
\def\P{\mathbf{P}}
\def\Z{\mathbb{Z}}
\def\H{\mathcal{H}}
\def\A{\mathbf{A}}
\def\V{\mathbf{V}}
\def\AA{\overline{\mathbf{A}}}
\def\B{\mathbf{B}}
\def\c{\mathbf{C}}
\def\L{\mathbf{L}}
\def\bS{\mathbf{S}}
\def\H{\mathcal{H}}
\def\I{\mathbf{I}}
\def\Y{\mathbf{Y}}
\def\X{\mathbf{X}}
\def\G{\mathbf{G}}
\def\B{\mathbf{B}}
\def\f{\mathbf{f}}
\def\z{\mathbf{z}}
\def\y{\mathbf{y}}
\def\d{\hat{d}}
\def\bx{\mathbf{x}}
\def\y{\mathbf{y}}
\def\v{\mathbf{v}}
\def\g{\mathbf{g}}
\def\w{\mathbf{w}}
\def\b{\mathcal{B}}
\def\a{\mathbf{a}}
\def\q{\mathbf{q}}
\def\u{\mathbf{u}}
\def\s{\mathcal{S}}
\def\cc{\mathcal{C}}
\def\co{{\rm co}\,}
\def\cp{{\rm CP}}
\def\tg{\tilde{f}}
\def\tx{\tilde{\x}}
\def\supmu{{\rm supp}\,\mu}
\def\supnu{{\rm supp}\,\nu}
\def\m{\mathcal{M}}
\def\bR{\mathbf{R}}
\def\om{\mathbf{\Omega}}
\def\c{\mathbf{c}}
\def\s{\mathcal{S}}
\def\bs{\mathbf{s}}
\def\k{\mathcal{K}}
\def\la{\langle}
\def\ra{\rangle}
\def\blambda{{\boldmath{\lambda}}}
\def\bsmlambda{\boldmath{\lambda}}
\def\ov{\overline{o}}
\def\und{\underline{o}}
\title[Computing the Hausdorff boundary measure of semi-algebraic sets]{Computing the Hausdorff boundary measure of semi-algebraic sets}
\thanks{This work has been supported by European Union’s Horizon 2020 research and innovation programme under the Marie Sklodowska-Curie Actions, grant agreement 813211 (POEMA). 
The research of the first author was funded by the European Research Council (ERC) under the European’s Union Horizon 2020 research and innovation program (grant agreement 666981 TAMING. 
The second author was supported by the FMJH Program PGMO (EPICS project) and  EDF, Thales, Orange et Criteo, from the Tremplin ERC Stg Grant ANR-18-ERC2-0004-01 (T-COPS project),  as well as ANITI, coordinated by the Federal University of Toulouse within the framework of French Program ``Investing for the Future C PIA3'' program under the Grant agreement n$^{\circ}$ANR-19-XXXX-000X}

\author{Jean-Bernard Lasserre and Victor Magron}

\address{Jean-Bernard Lasserre: LAAS-CNRS and Institute of Mathematics\\
University of Toulouse\\
LAAS, BP 54200, 7 avenue du Colonel Roche\\
31031 Toulouse C\'edex 4,France}
\email{lasserre@laas.fr}
\address{Victor Magron: LAAS-CNRS, BP 54200, 7 avenue du Colonel Roche\\
31031 Toulouse C\'edex 4,France}
\email{vmagron@laas.fr}
\date{}

\begin{abstract}
Given a compact basic semi-algebraic set $\om\subset\R^n$ we provide a numerical scheme
to approximate as closely as desired, any finite number of 
moments of the Hausdorff measure $\sigma$ on the boundary $\partial\om$. This also allows one to 
approximate interesting quantities like ``length", ``surface", or more general integrals on the boundary,
as closely as desired from above and below.
\end{abstract}

\keywords{Hausdorff boundary  measure; moments; basic compact semi-algebraic sets; volume, area, perimeter computation; semidefinite programming}

\subjclass{26B15 28A75 97G30 28A78 90C22}

\maketitle

\section{Introduction}

This paper is concerned with the Hausdorff boundary measure of a compact \emph{basic semi-algebraic set} $\om$, 
that is, a compact set defined by a finite conjunction of polynomial inequalities. Our main  contribution is to provide a 
systematic numerical scheme to approximate as closely as desired any (fixed) finite number of its moments, in particular its mass (the length or area of $\partial\om$).

Besides being a challenge of its own in computational mathematics, computation of moments (e.g. the mass) of the boundary measure has also important practical applications: 

- either practical ones, e.g., in computational geometry (perimeter, surface area), or in control for computing the length of trajectories in the context of robot motion planning~\cite{canny1988complexity},

- or theoretical ones such as (real) periods computation~\cite{kontsevich2001periods}.
Periods are integrals of rational functions with rational coefficients over semi-algebraic sets and the moments of the Hausdorff boundary measure of $\om$ are special cases of periods where the integrated rational functions are monomials.

In some applications (e.g. tomography), the moments of the Lebesgue measure on $\om$ are available after appropriate measurements. In this case the methodology slightly simplifies as they now appear as data instead of variables.

Of course certain \emph{line} or \emph{surface} specific integrals on $\partial\om$ reduce to surface of volume
integrals of a related function on $\om$ via Green's (or Stokes') theorem, in which case one may invoke the arsenal of techniques of multivariate integration on specific domains $\om$
(like Monte-Carlo and/or cubatures techniques). 
However, we have not been able to find in the literature a systematic numerical scheme for computing ``volume" of (or integrals on)
the boundary $\partial\om$ of a basic semi-algebraic set $\om$. To the best of our knowledge this work seems to be a first such attempt, at least at this level of generality.

\subsection*{Background} Among existing techniques for numerical volume computation and integration, Monte Carlo algorithms generate points uniformly in a box which contains $\om$ and approximate the volume by the ratio of the number of points that fall into $\om$, and similarly for integration. But of course such a technique requires $\om$ to be full dimensional.
Cubature formulas perform numerical integration on simple sets, such as simplices, boxes or balls. 
However, such formula are not available for arbitrary semi-algebraic sets. Let us also mention the more recent algorithm \cite{Lairez19} which provides arbitrary precision approximations of the volume of $\om$.
This algorithm is based on computing the Picard-Fuchs differential equations of appropriate periods and critical point properties. 

In contrast, our methodology follows the line of research initiated in~\cite{Las01sos} for solving the \emph{Generalized Moment Problem} (GMP) with algebraic data, by the {\em Moment-SOS Hierarchy} \cite{lasserre-icm}. In this approach the problem on hand (here volume computation or integration) is first modelled as an infinite-dimensional 
Linear Program (LP) on appropriate spaces of Borel measures. Then one approximates the solution of this LP by solving a hierarchy of semidefinite programming (SDP) problems. For a more general overview, we refer the interested reader to \cite{lasserre2009moments,lasserre-icm}. 

It turns out that the GMP is a rich model which encompasses a lot of important applications in various area of science and engineering, some of them described in \cite{lasserre2009moments} and \cite{lasserre-icm}. For instance this approach has been applied in the context of \emph{dynamical} systems with 
polynomial vector field to characterize (i) the backward reachable set~\cite{HK14roa}, (ii) the forward reachable set for discrete-time polynomial systems~\cite{reach19}, and (iii) the maximal positively invariant set for continuous-time polynomial systems~\cite{oustry2019inner}.
Also in~\cite{invsdp19}, the authors describe how to approximate numerically moments and supports of measures which are invariant with respect to the dynamics of polynomial systems, under semi-algebraic set constraints on the trajectories.

For volume computation of a semi-algebraic set $\om$, the GMP formulation as an infinite-dimensional LP has been developed in \cite{HLS09vol}.
It requires knowledge of a simple set $\B\supset\om$ such that all moments of the Lebesgue measure on $\B$ are available
(e.g., $\B$ is a Euclidean ball, an ellipsoid, a box). When $\om$ is the level set of a single homogeneous polynomial, 
the first author recently proposed in~\cite{lasserre2019volume} a related and alternative method 
which results in solving  a hierarchy of generalized eigenvalue problems (rather than a sequence of SDP problems) with respect to a pair of Hankel matrices of increasing size.

\subsection*{Contribution} Our contribution is in the spirit the work  \cite{HLS09vol} as we
also extensively use the GMP formulation as infinite-dimensional LPs on appropriate spaces of measures. Indeed, 
to compute the moments of $\sigma$ on $\partial\om$ we relate them (linearly) to the moments of the Lebesgue measure $\lambda$ on $\om$
via Stokes' theorem. (In \cite{HLS09vol} the Moment-SOS approach was used to approximate moments of $\lambda$.)
Namely:

$\bullet$ In Section~\ref{sec:homogeneous}, we first provide a numerical scheme to approximate as closely as desired any finite number of moments of the Hausdorff measure $\sigma$ on the boundary $\partial\om$ of a basic compact semi-algebraic set $\om$, assuming that $\om$ is described by homogeneous polynomials. This numerical scheme is based on a hierarchy of SDP relaxations whose optimal solutions yield better and better approximations of the moments of $\sigma$. 

Interestingly, in the proposed numerical scheme, moments of the Lebesgue measure on $\om$ appear as data 
or unknowns, depending on whether or not they are already available (e.g. analytically or from measurements).
Importantly, in this case one obtains two sequences of upper and lower bounds on $\sigma(\partial\om)$, with both sequences converging to $\sigma(\partial\om)$. 

$\bullet$ Then in Section~\ref{sec:convex} we extend  the approach
to the case where $\om$ is convex and not necessarily described by homogeneous polynomials. Finally in Section~\ref{sec:general} we also extend the approach to the case where $\om$ is not necessarily convex.
Some numerical experiments are provided in Section~\ref{sec:benchs}.

\subsection*{Underlying technique} One proceeds in two steps: 
First, by Stokes' theorem one relates (via {\em linear constraints}) all moments of the Lebesgue measure $\lambda$ on $\om$ with moments of a measure $\phi$ on $\partial\om$, absolutely continuous with respect to (w.r.t.) 
$\sigma$.
We are thus able to define an infinite-dimensional LP 
with unknowns $\lambda$ and $\phi$, an instance of the GPM with algebraic data. We then apply the Moment-SOS hierarchy
\cite{lasserre-icm} to approximate as closely as desired any fixed number of  moments of both $\lambda$ and $\phi$. If moments of $\lambda$ are already available (say e.g. from measurements as in tomography applications) then they appear as \emph{data} in the SDP-relaxations of the infinite-dimensional LP;
otherwise they are also treated as unknowns of SDP-relaxations.

In a second step we exploit explicit knowledge of the density of $\phi$ w.r.t.  $\sigma$ to relate $\phi$ and 
$\sigma$, again via linear constraints on their moments. This allows  us to define another infinite-dimensional LP
with now (i) the measure $\sigma$ as unknown,  and (ii) $\phi$ (more precisely its moments) already obtained at the first step, as \emph{data}.
This LP is also an instance of the GPM with algebraic data. Then once again we use the Moment-SOS hierarchy to approximate as closely as desired any fixed number of moments of $\sigma$.
It is worth noting that the density of $\phi$ w.r.t. $\sigma$ is {\em not} a polynomial 
as it is of the form $\frac{1}{\Vert \nabla g\Vert}$ for some polynomial $g$. However one is able to handle this semi-algebraic function via an appropriate lifting, a nice useful feature of the Moment-SOS methodology.

\section{Preliminary Background}

\subsection*{Semi-algebraic sets, measures and moments}
With $d,n \in \N$, let $\R[\x]$ (resp. $\R_{d}[\x]$) be the vector space of real-valued $n$-variate polynomials (resp. of degree at most $d$) in the variable $\x=(x_1,\ldots,x_n) \in \R^n$.
Let $\om\subset\R^n$ be the basic compact semi-algebraic set
\begin{equation}
\label{set-om}
\om\,:=\,\{\x\in\R^n:\: g_j(\x)\,\leq\,b_j,\quad j=1,\ldots,m\, \} \,,
\end{equation}
with $g_j \in \R[\x]$, for each $j=1,\dots,m$.
For later purpose, we also note $g_0(x) := 1$ and $b_0 := 0$.
%

Given a compact set $\A \subset \R^n$, we denote by $\mathscr{M}(\A)$ the vector space of finite signed Borel measures supported on $\A$,
and by $\mathscr{M}_+(\A)$ its subset nonnegative elements 
(i.e., positive Borel measures on $\A$).
The support of a measure $\mu \in \mathscr{M}_+(\A)$ is the smallest closed set $C\subset\A$ such that $\mu(\A\setminus C)=0$.

For $\mu,\nu \in \mathscr{M}_+(\A)$, one says that $\mu$ is \emph{dominated} by $\nu$ if $\nu-\mu \in \mathscr{M}_+(\A)$ and we refer to this by the notation $\mu \leq \nu$.
A measure $\mu\in\mathscr{M}_+(\A)$ is said to be \emph{absolutely continuous} with respect to $\nu$ if  for every  $C \in \mathcal{B}(\A)$, $\nu (C) = 0$ implies $\mu(C) = 0$.
In particular, if $\mu \leq \nu$ then $\mu$ is absolutely continuous with respect to $\nu$.\\

Throughout the paper we suppose that the following condition is fulfilled:
\begin{assumption}
\label{hyp:momb}
$\om\subset\B:=(-1,1)^n$.
\end{assumption}
Indeed as $\om$ is compact then Assumption \ref{hyp:momb} is satisfied, possibly after rescaling of the data.
From now on, let $\lambda_\B$ be the Lebesgue measure on $\B$. Its moments 
are denoted $\y^\B=(y^\B_\alpha)_{\alpha\in\N^n}$, i.e.,
\begin{equation}
\label{momb}
y^{\B}_{\alpha} := \int_\B \x^{\alpha} \lambda_\B(d\x)\,=\,\int_\B x_1^{\alpha_1}\cdots x_n^{\alpha_n}\, \lambda_\B (d\x)\,\quad \alpha \in \N^n \,,
\end{equation}
and  are available analytically. In particular $\vert y_\alpha^\B\vert\leq y_0^\B=2^n$, for all $\alpha\in\N^n$.
Other choices of $\B$ (e.g. an Euclidean ball, a box, an ellipsoid) are possible
as soon as all moments of $\lambda_\B$ can be obtained easily or in closed form.

\subsection*{Riesz functional} For a real sequence $\y =(y_{\alpha})_{\alpha \in \N^n} \in \R^{\N^n}$, we define the Riesz linear functional $L_\y : \R[\x] \to \R$ by 
$p\mapsto L_\y(p) := \sum_{\alpha} p_{\alpha} y_{\alpha}$, for all $p \in \R[\x]$. In particular if a sequence $\y$ has a representing measure $\mu$ then $L_\y(p) = \sum_{\alpha} p_{\alpha} y_{\alpha} = \sum_{\alpha} p_{\alpha} \int_\B  \x^{\alpha} \, d \mu = \int_\B p \, d \mu $. 

A measure $\mu$ with moments $(\mu_\alpha)_{\alpha\in\N^n}$ is moment determinate if there is no other measure with 
same infinite sequence of moments.

\subsection*{Moment and localizing matrix} Given $g_j\in\R[\x]$, let $d_j := \deg g_j$ and $r_j = \lceil d_j/2\rceil$, for all $j=0,1,\dots,m$. 
Then for every $j=0,\ldots,m$, the \emph{localizing matrix} $\M_r(g_j \, \y)$ associated with a sequence $\y$ and $g_j$, 
is a real symmetric matrix with rows and columns indexed by $\N_{d-r_j}^{n}$ and defined by: 
\[ 
(\M_d(g_j \, \y))_{\alpha, \beta} := L_\y(g_j(\x) \, \x^{\alpha + \beta}) \,, \quad
\forall \alpha, \beta \in \N_{d-r_j}^n \,. 
\]
For $j=0$ the localizing matrix is called the \emph{moment matrix} $\M_d(\y):=\M_{d-r_0}(g_0\,\y)$.
It is straightforward to check that if $\y$ has a representing measure $\mu\in\mathscr{M}_+(\om)$ then 
$\M_{d-r_j}((b_j-g_j) \, \y) \succeq 0$, for all $j=0,\dots,m$ (the notation $\succeq 0$ stands for positive semidefinite). 
We next provide some useful sufficient conditions for 
converse results.

\subsection*{Sufficient moment conditions}

\begin{thm}
\label{th:bounded-1}
Let $\y=(y_\alpha)_{\alpha\in\N^n}$ $g\in\R[\x]$ be such that 
$\M_d(\y)\succeq0$ and $\M_d(g\,\y) \succeq 0$ for all $j = 1,\dots,m$, and all $d\in\N$. If $\vert y_\alpha\vert\leq c\,M^{\vert\alpha\vert}$ for all $\alpha\in\N^n$ and some $c,M>0$ 
then $\y$ has a unique representing measure $\phi$ on $[-M,M]^n$ and ${\rm supp}(\phi)\subset[-M,M]^n\cap\{\x: g(\x)\geq0\}$.
\end{thm}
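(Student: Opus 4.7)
The plan is to establish the conclusion in two stages: first, produce a unique representing measure $\phi$ for $\y$ supported in $[-M,M]^n$, using only the moment-matrix condition $\M_d(\y)\succeq 0$ and the growth bound $|y_\alpha|\leq cM^{|\alpha|}$; then use the localizing-matrix condition $\M_d(g\y)\succeq 0$ to trim the support down to $\{g\geq 0\}$.

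\textbf{Stage one: existence, uniqueness, and support in $[-M,M]^n$.} The assumption $\M_d(\y)\succeq 0$ for every $d$ is equivalent to saying $L_\y(p^2)\geq 0$ for all $p\in\R[\x]$, i.e., $L_\y$ is nonnegative on the cone of sums of squares. The growth bound gives two complementary pieces of information. On the one hand, $(y_{2k e_i})^{1/(2k)}\leq (cM^{2k})^{1/(2k)}\to M$, so $\sum_k (y_{2k e_i})^{-1/(2k)} = \infty$ for every coordinate $i$; this is a multivariate Carleman-type condition which, together with nonnegativity of $L_\y$ on $\Sigma[\x]$, forces determinacy and existence of a unique Radon measure $\phi$ representing $\y$ on $\R^n$ (a classical result, e.g.\ in Schmüdgen's monograph on the moment problem). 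On the other hand, the same bound precludes mass outside $[-M,M]^n$: if $\phi\bigl(\{|x_i|>M+\varepsilon\}\bigr)=\delta>0$ for some $\varepsilon,\delta>0$, then $y_{2ke_i}=\int x_i^{2k}\,d\phi\geq \delta(M+\varepsilon)^{2k}$, which grows strictly faster than $cM^{2k}$ and violates the hypothesis; taking $\varepsilon\to 0$ yields $\mathrm{supp}(\phi)\subset[-M,M]^n$.

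\textbf{Stage two: support in $\{g\geq 0\}$.} The condition $\M_d(g\y)\succeq 0$ for every $d$ rewrites as $L_\y(g\,p^2)\geq 0$ for every $p\in\R[\x]$. Using the integral representation from Stage one,
\begin{equation*}
\int_{[-M,M]^n} g(\x)\,p(\x)^2\,d\phi(\x)\;\geq\;0,\qquad \forall p\in\R[\x].
\end{equation*}
Since $\phi$ is finite and $g$ is continuous on the compact set $[-M,M]^n$, Stone--Weierstrass density of polynomials in $C([-M,M]^n)$ extends the inequality to $\int g f^2\,d\phi\geq 0$ for every $f\in C([-M,M]^n)$. A standard inner-regularity plus bump-function argument then forces $g\geq 0$ $\phi$-a.e.: if the open set $\{g<-\delta\}\cap[-M,M]^n$ had positive $\phi$-measure for some $\delta>0$, choose a nonzero continuous $f$ supported in it to produce $\int gf^2\,d\phi\leq -\delta\int f^2\,d\phi<0$, a contradiction. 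Hence $\mathrm{supp}(\phi)\subset[-M,M]^n\cap\{g\geq 0\}$.

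\textbf{Main obstacle.} The subtle point is Stage one, since for multivariate moment problems positivity of moment matrices alone does \emph{not} guarantee a representing measure. The decisive ingredient is the strong growth bound $|y_\alpha|\leq cM^{|\alpha|}$, which delivers both compact support and (Carleman-type) determinacy simultaneously. If one prefers a self-contained route over citing the multivariate moment literature, the alternative is to exhibit $L_\y$ as a positive linear functional that extends continuously from $\R[\x]$ to $C([-M,M]^n)$ (using the growth bound to control polynomial norms) and then apply Riesz representation; Stage two proceeds unchanged. Stage two itself is routine once Stage one is in hand.
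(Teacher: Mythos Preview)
Your proof is correct and follows the same two-stage architecture as the paper: first obtain a (unique) representing measure supported in $[-M,M]^n$, then use the localizing condition to force $g\geq 0$ on the support. The paper's own proof is a two-line affair that simply cites \cite[Proposition~3.5(b)]{lasserre2009moments} for the first stage and \cite[Theorem~3.2(a)]{Las11} for the second; you have essentially unpacked the content of those citations. The only mild difference is that in Stage~one you pass through the multivariate Carleman condition to get a measure on $\R^n$ and then argue the support lies in $[-M,M]^n$ by a growth contradiction, whereas the cited Proposition~3.5(b) delivers the measure on $[-M,M]^n$ in one stroke; this is a harmless detour and in fact matches the paper's own Lemma (multivariate Carleman) stated immediately after the theorem. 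Your Stage~two argument via Stone--Weierstrass and a bump-function contradiction is precisely what underlies the cited Theorem~3.2(a).
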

\begin{proof}
By \cite[Proposition 3.5(b)]{lasserre2009moments}, $\y$ has a representing measure $\phi$ on $[–M,M]^n$.
Next as $\phi$ has compact support and $\M_d(g\,\y)\succeq0$ for all $d\in\N$, then by \cite[Theorem 3.2(a)]{Las11}
$g(\x)\geq0$ for all $\x\in{\rm supp}(\phi)$.
\end{proof}
\begin{lem}(Multivariate Carleman condition)
\label{carleman}
Let $\y=(y_\alpha)_{\alpha\in\N^n}$ be such that $\M_d(\y)\succeq0$ for all $d\in\N^n$, and
\begin{equation}
    \label{carleman-1}
    \sum_{k=1}^\infty L_{\y}(x_i^{2k})^{-1/2k}\,=\,+\infty,\quad \forall i=1,\ldots,n.\end{equation}
Then $\y$ has a representing measure on $\R^n$ which is moment determinate.
\end{lem}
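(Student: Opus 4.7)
The plan is to reduce both existence and determinacy to the univariate case applied variable-by-variable, and then patch the univariate information together. First, I would observe that for each $i\in\{1,\ldots,n\}$, restricting the Riesz functional $L_\y$ to polynomials in $x_i$ alone yields a univariate moment sequence $s^{(i)}_k := L_\y(x_i^k)$ whose Hankel matrix is a principal submatrix of $\M_d(\y)$, hence positive semidefinite. By the Hamburger theorem this univariate sequence is represented by some Borel measure $\nu_i$ on $\R$, and hypothesis (\ref{carleman-1}) is precisely the classical univariate Carleman condition for $\nu_i$, which forces $\nu_i$ to be moment determinate.

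For existence of a multivariate representing measure, I would invoke Haviland's theorem: it suffices to show that $L_\y(p)\ge 0$ for every polynomial $p$ nonnegative on $\R^n$. Because the moment matrices $\M_d(\y)$ are all PSD, $L_\y$ is nonnegative on sums of squares; the Carleman bound on each $x_i$-axis then upgrades this to nonnegativity on \emph{all} nonnegative polynomials via Nussbaum's theorem (each variable is ``tame enough'' that one can approximate any nonnegative polynomial from below by SOS plus multipliers in a controlled way). This gives a representing measure $\phi$ on $\R^n$.

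For determinacy, suppose $\phi_1,\phi_2$ are two representing measures. Their marginal in coordinate $x_i$ has univariate moments $s^{(i)}_k$, which by Step~1 uniquely determine the marginal. The classical argument of Petersen then shows that when every one-dimensional marginal is determinate, the joint distribution is determinate: using the one-dimensional Carleman growth, the characteristic function $\hat\phi_j(t_1,\ldots,t_n)$ extends to an entire function in each variable of exponential type, and agreement of all moments forces agreement of these entire extensions, hence of the characteristic functions and therefore of $\phi_1$ and $\phi_2$.

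The main obstacle is step three, the multivariate determinacy: PSD moment matrices alone do not imply determinacy, and one really needs the analytic-extension machinery of Carleman's criterion applied variable by variable together with a Fubini/separability argument to fuse the one-dimensional uniqueness into joint uniqueness. Everything else is essentially bookkeeping on Hamburger and Haviland.
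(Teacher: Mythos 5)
The paper gives no proof of this lemma at all; it is stated as a standing classical fact (it is Nussbaum's theorem, 1965, combined with Petersen's marginal-determinacy theorem), so the right comparison is against the standard proof in the literature. Against that, your outline has the pieces in roughly the right places but misidentifies where the difficulty lies, and the crucial step is circular.

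Your Step 1 is fine: each univariate section $s^{(i)}_k=L_\y(x_i^k)$ has a PSD Hankel matrix, so Hamburger gives a representing measure $\nu_i$ on $\R$, and the univariate Carleman condition makes $\nu_i$ determinate. Your Step 3 (Petersen: if every one-dimensional marginal of a measure is determinate then the measure is determinate) is also the correct tool, although the mechanism is quasi-analyticity of the characteristic function via the Denjoy--Carleman theorem, not analyticity or exponential type. Where the proposal breaks is Step 2, the \emph{existence} of a joint representing measure, which you wave off as ``bookkeeping on Haviland''. It is not: in $n\ge 2$ variables, positive semidefiniteness of all moment matrices does \emph{not} imply that $L_\y$ is nonnegative on all nonnegative polynomials, and indeed does not imply the existence of any representing measure (the Berg--Christensen--Jensen / Friedrich counterexamples). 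The sentence ``the Carleman bound on each $x_i$-axis then upgrades this to nonnegativity on all nonnegative polynomials via Nussbaum's theorem'' is circular, because the statement you are trying to prove \emph{is} Nussbaum's theorem; and the heuristic of approximating nonnegative polynomials from below by SOS-with-multipliers has no general validity on the noncompact set $\R^n$. The genuine content of the lemma is the existence argument, which in Nussbaum's proof proceeds through operator theory: one forms the GNS Hilbert space from $L_\y$, shows via the Carleman condition that the cyclic vector is a quasi-analytic vector for each symmetric multiplication operator $M_{x_i}$, concludes that each $M_{x_i}$ is essentially self-adjoint and that their closures strongly commute, and then takes the joint spectral measure. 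Determinacy then follows almost for free (either from the same operator-theoretic uniqueness, or by your Step 1 + Petersen). So the obstacle is not Step 3 but Step 2, and as written the proposal does not supply a proof of it.
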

\subsection*{Stokes' theorem}
Let $\om$ be a smooth manifold with boundary $\partial\om$.
Given a polynomial $p \in \R[\x]$, Stokes' theorem with vector field $X$ states that:
\begin{equation}
\label{stokes-0}
\int_\om {\rm Div}(X p(\x))\,d\x\,=\,\int_{\partial\om}\langle X,\vec{n}_\x\rangle \, p(\x) \,d\sigma \,,
\end{equation}
where $\partial\om$ stands for the boundary of $\om$, $\sigma$ is the $(n-1)$-dimensional Hausdorff boundary measure on $\partial\om$, and $\vec{n}_\x$ is the  outward pointing normal to $\partial\om$; see e.g. Taylor 
\cite[Proposition 3.2, p. 128]{taylor}.
Then Whitney \cite[Theorem 14A]{whitney} generalized Stokes' theorem  to rough domains $\om$ (e.g. with corners). For instance, in our cas:
\begin{equation}
    \label{eq:boundary}
    \partial\om \,=\,\cup_{j=1}^m \om_j\mbox{ with }\om_j\,=\,\{\x\in\om: g_j(\x)\,=\,b_j\,\}.
    \end{equation}
Throughout the paper we assume that
 \begin{equation}
 \label{nondegeneracy}
 \forall\x\in\om_j:\quad \Vert\nabla g_j(\x)\Vert\,\neq\,0,\quad j=1,\ldots,m,
 \end{equation}
 and therefore, with $\x\mapsto t_j(\x):=\Vert\nabla g_j(\x)\Vert^2$,
 \begin{equation}
 \label{tj-bound}
     t_j(\x)\,\geq\,a_j\,,\qquad\forall \x\in\om_j\,,;\quad j=1,\ldots,m,
 \end{equation}
 for some $a_j>0$, $j=1,\ldots,m$.
 
Developing \eqref{stokes-0} yields:
\begin{equation}
\label{stokes-1}
\int_\om {\rm Div}(X)\, p(\x) +\langle X,\nabla p(\x) \rangle\,d\x\,=\,\int_{\partial\om}\langle X,\vec{n}_\x\,\rangle \, p(\x) \,d\sigma(\x) \,.
\end{equation}
Next, select the vector field $X=\x$ and note that 
$\langle \x,\nabla \x^\alpha \rangle = \vert\alpha\vert \x^\alpha$  for all $\alpha\in\N^n$.
%
Then with $p(\x) = \x^\alpha$ in \eqref{stokes-1} with $\alpha \in \N^n$ arbitrary:
\begin{eqnarray}
\label{stokes-1a}
(n+\vert\alpha\vert)\,\int_\om \x^\alpha\,d\x&=&
\sum_{j=1}^m
\int_{\om_j}\left\langle \x,\frac{\nabla g_j(\x)}{\Vert\nabla g_j(\x)\Vert}\right\rangle\, \x^\alpha\,d\sigma_j(\x),\\
&=&
\sum_{j=1}^m
\int_{\om_i}\x^\alpha\,\langle\x,\nabla g_j(\x)\rangle\,\frac{d\sigma_j(\x)}{\Vert \nabla g_j(\x)\Vert},
\end{eqnarray}
where $\sigma_j$ is the restriction to $\om_j$ of the Hausdorff measure $\sigma$ on $\partial\om$.
%

When $\alpha = 0$, then \eqref{stokes-1} has a simple geometric interpretation, easy to visualize in dimension $n=2$. Indeed with $0\in\,{\rm int}(\om)$,
(i) $\langle \x,\vec{n}_\x\rangle$ is the ``height" from $0$ to the hyperplane tangent to $\om$ at the point $M\in\partial\om$ (with coordinate $\x$), and
(ii) $d\sigma(\x)$ is the infinitesimal ``length" $[M,M']$ on $\partial\om$ around $M$, and so $\frac{1}{n}\langle\x,\vec{n}_\x\rangle d\sigma(\x)$
is the infinitesimal ``area" of the  triangle $(O,M,M')$, that is the length of base $[M,M']$ times the height $[0,M]$; see Figure \ref{fig}.
\if{
\begin{figure*}[ht]
{\includegraphics[width=45mm]{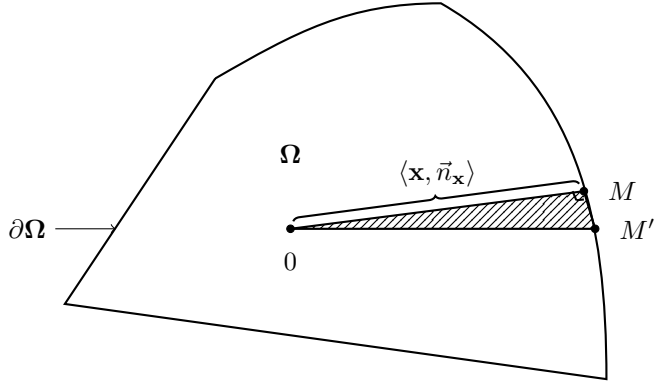}}
\caption{Geometric interpretation \label{fig}}
\end{figure*}
}\fi
\begin{figure}[h!]
  \begin{center}
    \begin{tikzpicture}
      \draw [thick] (-1,2) 
      to [out=30,in=180] (2,3)
      to [out=-30,in=90] (4.2,-2) 
      to (-3,-1)
      to  (-1,2) ;
      \draw [black,fill] (0,0) circle [radius=0.05] node (O) [black,below=0.2] {$0$}; 
            \draw [black,fill] (4.05,0) circle [radius=0.05] node (M') [black,right=0.2]  {$M'$};
            \draw [black,fill] (3.9,0.5) circle [radius=0.05] node (M) [black,right=0.2]  {$M$};
      \draw  (0,1) node  {$\om$};
	 \draw  (-3.5,0) node (A) {$\partial \om$};
	 \draw (-2.2,0) node (B) {};
	 \draw [->] (A) -- (B);
     \draw [thick] (0,0) -- (4.05,0); 
     \draw [thick] (0,0) -- (3.9,0.5);
     \draw [thick] (4.05,0) -- (3.9,0.5);
     \filldraw[color=black,pattern=north east lines] (0,0) -- (4.05,0) -- (3.9,0.5) ;
    \draw [thick] (3.75,0.5) -- (3.8,0.38);
    \draw [thick] (3.8,0.38) -- (3.9,0.39);
     \draw [thick,decoration={brace,raise=0.1cm},
    decorate
] (0.05,0) -- (3.85,0.5) 
node [pos=0.5,anchor=north,yshift=0.8cm] {$\langle\x,\vec{n}_\x\rangle$}; 
    \end{tikzpicture}
  \end{center}
  \caption{Geometric interpretation \label{fig}
}
\end{figure}

There is also a non geometric interpretation. When the $g_j$'s are polynomials then so are the functions $\x\mapsto \langle\x,\nabla g_j(\x)\rangle$'s, which
yields a simple interpretation for \eqref{stokes-1}. Indeed \eqref{stokes-1} states that for each $\alpha\in\N^n$, the moment $\int_\om \x^\alpha d\x$ is some {\em  linear combination} of moments of the 
measure $d\hat{\sigma}=fd\sigma$ on $\partial\om$, with density $f(\x):=\Vert \nabla g_j(\x)\Vert^{-1}$ on $\om_j$, $j=1,\ldots,m$,
(recall \eqref{nondegeneracy}).

\section{The Hausdorff boundary measure of a semi-algebraic set}
We make the following technical assumption on the polynomials $g_j$ that define the boundary $\partial\om$.
\begin{assumption}
\label{ass-2}
Let $\om_j$ be as in \eqref{eq:boundary} and let $\sigma_j$ be the restriction to $\om_j$ of the Hausdorff measure $\sigma$ on $\partial\om$, $j=1,\ldots,m$. Then for every $j=1,\ldots,m$,
$\sigma_j(\{\x: g_k(\x)=b_k\})=0$ for all $k\neq j$.
\end{assumption}

\subsection{The homogeneous case}
\label{sec:homogeneous}
If $g_j$ is homogeneous of degree $d_j$, for each $j=1,\dots,m$, then
$\langle\x,\nabla g_j(\x)\rangle =d_j\,g_j(\x)$ for all $\x \in \R^n$. Therefore, one has:
\begin{equation}
\label{stokes-homog}
(n+\vert\alpha\vert)\,\int_\om \x^\alpha\,d\x\,=\,
\sum_{j=1}^m d_j\,b_j
\int_{\om_j}\x^\alpha\,\frac{d\sigma_j(\x)}{\Vert \nabla g_j(\x)\Vert},\quad\alpha\in\N^n \,.
\end{equation}
Recall that $\lambda_{\om}$ is the restriction of the Lebesgue measure on $\om$.
\subsection*{A first observation} 
\begin{lem}
Let $\om$ in \eqref{set-om} be compact with $b_j>0$ and where $g_j\in\R[\x]$ is homogeneous of degree $d_j$, $j=1,\ldots,m$. Let $\y=(y_\alpha)_{\alpha\in\N^n}$ be the sequence of moments of $\lambda_{\om}$. 

Then the sequence $\hat{\y}=(\hat{y}_\alpha)_{\alpha\in\N^n}=( (n+\vert\alpha\vert) y_\alpha )_{\alpha\in\N^n}$ is 
the sequence of moments of a boundary measure $\phi$ on $\partial\om$, absolutely continuous with respect to 
the Hausdorff measure $\sigma$ on $\partial\om$, which reads:
\begin{eqnarray}
\label{lem-1}
d\phi(\x)&=&\sum_{j=1}^m \frac{b_j\,d_j}{\Vert \nabla g_j(\x)\Vert} \,d\sigma_j(\x),\\
\nonumber
&=&\frac{b_j\,d_j}{\Vert \nabla g_j(\x)\Vert} \,d\sigma(\x),\quad\forall \x\in\om_j,\quad j=1,\ldots,m.
\end{eqnarray}
\end{lem}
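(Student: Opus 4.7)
The plan is to read off the claim directly from the homogeneous Stokes identity \eqref{stokes-homog} combined with Assumption \ref{ass-2} and the non-degeneracy condition \eqref{nondegeneracy}.

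First, I would define the candidate measure $\phi$ on $\partial\om$ by
\[
\phi(A)\,:=\,\sum_{j=1}^m b_j\,d_j\int_{A\cap\om_j}\frac{d\sigma_j(\x)}{\Vert\nabla g_j(\x)\Vert},\qquad A\in\mathcal{B}(\partial\om),
\]
and check it is a well-defined finite positive Borel measure on $\partial\om$. Finiteness and well-posedness follow because on each $\om_j$ the density $\Vert\nabla g_j(\x)\Vert^{-1}$ is bounded above by $a_j^{-1/2}$ thanks to \eqref{tj-bound}, while $\sigma_j(\om_j)<+\infty$ since $\om_j$ is compact. In particular $\phi$ is absolutely continuous with respect to each $\sigma_j$, hence with respect to the Hausdorff measure $\sigma$ on $\partial\om$.

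Next, I would compute the moments of $\phi$. By construction,
\[
\int_{\partial\om}\x^\alpha\,d\phi(\x)\,=\,\sum_{j=1}^m b_j\,d_j\int_{\om_j}\x^\alpha\,\frac{d\sigma_j(\x)}{\Vert\nabla g_j(\x)\Vert},
\]
and the right-hand side is precisely the expression appearing in \eqref{stokes-homog}, which equals $(n+\vert\alpha\vert)\int_\om \x^\alpha\,d\x=(n+\vert\alpha\vert)\,y_\alpha=\hat{y}_\alpha$. Thus $\hat{\y}$ is indeed the moment sequence of $\phi$, proving the first displayed identity in \eqref{lem-1}.

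Finally, I would derive the second displayed identity in \eqref{lem-1}, namely that when restricted to a single face $\om_j$ one has $d\phi(\x)=\frac{b_j d_j}{\Vert\nabla g_j(\x)\Vert}\,d\sigma(\x)$. This is where Assumption \ref{ass-2} enters: for any Borel $A\subset\om_j$, the contribution of every other piece $\om_k$ ($k\neq j$) to $\phi(A)$ vanishes because $A\subset\{g_j=b_j\}$ and $\sigma_k(A\cap\om_k)\leq \sigma_k(\{g_j=b_j\})=0$. So only the $j$-th summand survives, and since $\sigma_j$ is by definition the restriction of $\sigma$ to $\om_j$, the claimed local density formula follows.

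The main (very mild) obstacle is the bookkeeping at the corners $\om_j\cap\om_k$: one has to ensure that the formal sum defining $\phi$ does not double-count mass, and this is precisely what Assumption \ref{ass-2} guarantees; the rest is a direct translation of Stokes' identity \eqref{stokes-homog} into measure-theoretic language.
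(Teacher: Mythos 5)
Your proof is correct and follows essentially the same route as the paper: define $\phi$ by the density formula, check finiteness and absolute continuity via \eqref{nondegeneracy}--\eqref{tj-bound}, and identify the moments using the homogeneous Stokes identity \eqref{stokes-homog}. The paper's own proof is terser (it does not spell out the well-definedness check or the role of Assumption \ref{ass-2} in the local density formula on each $\om_j$), but the underlying argument is identical.
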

\begin{proof}
From \eqref{stokes-homog} we obtain
\begin{eqnarray*}
(n+\vert\alpha\vert) y_\alpha  &=&
\sum_{j=1}^m \int_{\om_j}\x^\alpha\,\frac{d_jb_j\,d\sigma_j(\x)}{\Vert \nabla g_j(\x)\Vert}\\
&=&
\int_{\partial\om}\x^\alpha\,\underbrace{d\left(\sum_{j=1}^m\frac{b_j\,d_j}{\Vert\nabla g_j(\x)\Vert}\,\sigma_j\right)}_{d\phi}(\x)\end{eqnarray*}
The measure $d\phi=\sum_{j=1}^n \frac{b_jd_j}{\Vert \nabla g_j(\x)\Vert} d\sigma_j$ is a positive measure on 
$\partial\om$, absolutely continuous w.r.t. $\sigma$.
\end{proof}
To recover the surface measure $\sigma$ from $\y=(y_\alpha)_{\alpha\in\N^n}$ we proceed in two steps: Define
\begin{equation}
\label{def-phi-j}
d\phi_j(\x)\,=\,\frac{d\sigma_j(\x)}{\Vert \nabla g_j(\x)\Vert},\quad j=1,\ldots,m,\end{equation}
so that by \eqref{stokes-1} and \eqref{lem-1}, for every $\alpha\in\N^n$:
\if{
\begin{equation}
\label{stokes-2}
(n+\vert\alpha\vert)\,y_\alpha\,=\,\sum_{j=1}^m b_jd_j\,\int_{\om_j}
\x^\alpha\,d\phi_j(\x)\,=\,\int_{\partial\om} \x^\alpha\,d\left(\sum_{j=1}^mb_jd_j\,\phi_j\right).
\end{equation}
}\fi
\begin{equation}
\label{stokes-2}
(n+\vert\alpha\vert)\,y_\alpha\,=\,\sum_{j=1}^m b_jd_j\,\int_{\om_j}
\x^\alpha\,d\phi_j(\x)\,=\,\int_{\partial\om} \x^\alpha \sum_{j=1}^md_jb_j\,  d\phi_j .
\end{equation}

\subsection*{Step 1} In Step 1 we compute  
moments of the measures $\phi_j$ in \eqref{def-phi-j}.
Consider the following infinite-dimensional LP:
\if{
\begin{equation}
\label{lp-1}
\begin{array}{rl}
\rho=\displaystyle\inf_{\mu_j}&  \,\displaystyle\sum_{j=1}^m\int_{\om_j} d\mu_j\\
\text{s.t.} & (n+\vert\alpha\vert)\, \displaystyle\int_{\om_j} \x^\alpha\,d\mu  \,=\,
\displaystyle\sum_{j=1}^m d_j\,b_j\,\displaystyle\int_{\om_j} \x^\alpha\,d\mu_j,\quad\forall\alpha\in\N^n \,,\\
&\mu_j\in\mathscr{M}(\om_j)_+,\:j=1,\ldots,m \, .
\end{array}
\end{equation}
}\fi
\begin{align}
\label{lp-1}
\nonumber
\rho=\displaystyle\sup_{\mu,\mu_j}&  \ \int_{\om} d\mu\\ \nonumber
\text{s.t.} & \ \mu \leq \lambda_\B  \,, \\ 
& \ (n+\vert\alpha\vert) \displaystyle\int_{\om} \x^\alpha d\mu =
\displaystyle\sum_{j=1}^m d_j\,b_j\,\displaystyle\int_{\om_j} \x^\alpha   d\mu_j, \, \alpha\in\N^n  \,, \\ \nonumber
& \ \mu\in\mathscr{M}_+(\om)\,, \mu_j\in\mathscr{M}_+(\om_j),\:j=1,\ldots,m \, .
\end{align}
\begin{thm}
\label{first-boundary}
Assume that $b_j>0$ for all $j=1,\ldots,m$ and let Assumption \ref{ass-2} hold.
Then $(\lambda_{\om}, \phi_1,\ldots,\phi_m)$ with $\phi_j$ on $\om_j$ as in \eqref{def-phi-j},
$j=1,\ldots,m$, is the unique optimal solution of LP~\eqref{lp-1} and $\rho = \vol \, (\om)$,
\end{thm}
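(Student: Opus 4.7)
\medskip

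\noindent\textbf{Proof plan.} The strategy is to handle feasibility, then get optimality through a one-line upper bound coming from the domination constraint, and finally split the uniqueness argument into two stages: first pin down $\mu$, then pin down each $\mu_j$.

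\emph{Feasibility and objective value.} I would first check that the candidate $(\lambda_{\om},\phi_1,\ldots,\phi_m)$ is admissible. Since $\om\subset\B$ (Assumption~\ref{hyp:momb}), we have $\lambda_{\om}\le\lambda_\B$. The moment equations are precisely \eqref{stokes-homog} rewritten through the definition \eqref{def-phi-j} of $\phi_j$. This candidate produces objective value $\lambda_{\om}(\om)=\vol(\om)$.

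\emph{Upper bound and identification of $\mu$.} For any feasible triple, the support condition $\mu\in\mathscr{M}_+(\om)$ together with $\mu\le\lambda_\B$ forces $\mu\le\lambda_{\om}$. Hence $\int_\om d\mu=\mu(\om)\le\lambda_{\om}(\om)=\vol(\om)$. Combined with the feasible point above, this proves $\rho=\vol(\om)$; moreover at any optimum $\mu\le\lambda_{\om}$ with equal total mass, so $\mu=\lambda_{\om}$ is forced.

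\emph{Uniqueness of the $\mu_j$.} Plugging $\mu=\lambda_{\om}$ into the moment constraints and using \eqref{stokes-homog} yields, for every $\alpha\in\N^n$,
\[
\sum_{j=1}^m d_j b_j\int_{\om_j}\x^\alpha\,d\mu_j\;=\;(n+|\alpha|)\,y_\alpha\;=\;\sum_{j=1}^m d_j b_j\int_{\om_j}\x^\alpha\,d\phi_j.
\]
Because $\partial\om$ is compact, a finite Borel measure on $\partial\om$ is determined by its moments (Weierstrass), so the two aggregate measures $\sum_j d_jb_j\mu_j$ and $\sum_j d_jb_j\phi_j$ on $\partial\om$ coincide. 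To pass from equality of sums to equality of summands, introduce the "private" piece $\om_j^{\circ}:=\om_j\setminus\bigcup_{k\ne j}\om_k$ and the overlap $E:=\partial\om\setminus\bigcup_j\om_j^\circ\subset\bigcup_{j\ne k}(\om_j\cap\om_k)$. Assumption~\ref{ass-2} gives $\sigma_j(E)=0$ for each $j$, hence $\phi_j(E)=0$ by absolute continuity. Restricting the measure equality to $E$ yields $\sum_j d_jb_j\,\mu_j|_E=0$; positivity of each term and $d_jb_j>0$ force $\mu_j|_E=0$. Restricting instead to the disjoint sets $\om_j^\circ$ (on which only $\mu_j$ and $\phi_j$ can charge, since the other summands are supported on $\om_k$ with $k\ne j$) yields $\mu_j|_{\om_j^\circ}=\phi_j|_{\om_j^\circ}$. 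Combining the two restrictions gives $\mu_j=\phi_j$ for all $j$.

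\emph{Main obstacle.} The routine parts are feasibility and the domination bound; the only delicate step is the last one, because the aggregated linear constraints only determine the \emph{sum} $\sum_j d_jb_j\mu_j$ on $\partial\om$. Without Assumption~\ref{ass-2} the splitting into components would be genuinely non-unique (mass could be shuffled along pairwise intersections $\om_j\cap\om_k$); exploiting $\sigma_j$-negligibility of those intersections is precisely what breaks the ambiguity.
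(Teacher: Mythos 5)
Your proof is correct, and the uniqueness step takes a noticeably different path from the paper's. The paper isolates the $j$-th component of the vanishing sum $\sum_k d_k b_k\int_{\om_k}\x^\alpha\,d(\phi_k-\mu_k)=0$ by testing against monomial multiples of the \emph{polynomial} $h_j:=\prod_{l\neq j}(b_l-g_l)$, which vanishes on $\om_k$ for $k\neq j$; this yields $h_j\,d\phi_j=h_j\,d\mu_j$, from which $\mu_j=\phi_j+\eta_j$ with $\eta_j$ concentrated on $\{h_j=0\}$, and a final evaluation at $\alpha=0$ kills $\eta_j$. You instead first upgrade the moment equality to an equality of \emph{measures} $\sum_j d_jb_j\mu_j=\sum_j d_jb_j\phi_j$ on the compact set $\partial\om$ (Stone--Weierstrass), then exploit the set-theoretic partition of $\partial\om$ into the disjoint private pieces $\om_j^\circ$ and the overlap $E$: on each $\om_j^\circ$ only the $j$-th summand can charge, and on $E$ absolute continuity w.r.t.\ $\sigma_j$ plus Assumption~\ref{ass-2} forces every $\phi_j$ (hence, by positivity, every $\mu_j$) to vanish. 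Both proofs lean on Assumption~\ref{ass-2} in the same essential way --- it is exactly what prevents mass from being shuffled across the intersections $\om_j\cap\om_k$ --- but your route replaces the algebraic multiplier trick and the $\alpha=0$ contradiction with a direct measure-decomposition argument, which arguably makes the role of the assumption more transparent. Your identification $\mu=\lambda_\om$ from $\mu\le\lambda_\B$ and $\operatorname{supp}\mu\subset\om$ is also a clean, self-contained rendering of what the paper delegates to \cite[Theorem 3.1]{HLS09vol}.
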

\begin{proof}
From~\cite[theorem~3.1]{HLS09vol}, the measure $\lambda_{\om}$ is the unique optimal solution of the following infinite-dimensional LP problem:
\begin{equation}
\label{volume} 
\rho' = \sup_{\mu}\: \{\,\int_{\om} d\mu\,:\: \mu \leq \lambda_\B\,;\:\mu \in \mathscr{M}_+(\om) \,\}.
\end{equation}
and $\rho' = \vol \, (\om)$.
In addition $\rho' \geq \rho$. 
Since $(\lambda_{\om}, \phi_1,\ldots,\phi_m)$ 
is a feasible solution of LP~\eqref{lp-1}, one has $\rho \geq \rho'$. Thus $\rho = \rho' = \vol \, (\om)$.

Let $(\mu, \mu_1,\ldots,\mu_m)$ be another optimal solution for LP~\eqref{lp-1}. Then $\mu$ is an optimal solution of LP~\eqref{volume}, yielding $\mu = \lambda_{\om}$.
In addition:
\begin{equation}
\label{eq:vanish}
0=\displaystyle\sum_{j=1}^m d_j\,b_j\,\displaystyle\int_{\om_j} \x^\alpha \,d(\phi_j-\mu_j),\quad \forall \alpha\in\N^n.
\end{equation}
Let $j \in \{1,\ldots,m\}$ and define $\x\mapsto h_j(\x):=\prod_{l\neq j}(b_l-g_l(\x))$. Then by \eqref{eq:vanish},
\begin{equation}
\label{aux}
0 \,=\, \sum_{j=1}^m d_j \, b_j \int_{\om_j}\x^\beta\, h_j(\x)\,d(\phi_j-\mu_j) \,=\, d_j b_j \int_{\om_j}\x^\beta\, h_j(\x)\,d(\phi_j-\mu_j) \,,
\end{equation}
for all $\beta\in\N^n$.
Therefore as \eqref{aux} holds for all $\beta\in\N^n$ and $\om_j$ is compact:
\[ h_j\,d\phi_j\,=\,h_j\,d\mu_j,\quad \forall j=1,\ldots,m.\]
Moreover by Assumption \ref{ass-2}, $h_j(\x)\neq0$, for $\phi_j$-almost all $\x\in\om_j$. 
Hence $\mu_j=\phi_j+\eta_j$ where $\eta_j$ is a measure supported on 
$\{\x\in\om_j: h_j(\x)=0\}$ and therefore $\phi_j(\om_j)\leq\mu_j(\om_j)$. Suppose that 
$\eta_j(\om_j)>0$ for some $j$ so that $\phi_j(\om_j)<\mu_j(\om_j)$. Then \eqref{eq:vanish} with $\alpha=0$ yields the contradiction
\[0\,=\,\sum_{j=1}^m d_j\,b_j\,(\phi_j(\om_j)-\mu_j(\om_j))\,<\,0,\]
and so $\mu_j=\phi_j$ for all $j=1,\ldots,m$.  That is, $(\phi_1,\ldots,\phi_m)$ is the unique feasible (and optimal) solution of \eqref{lp-1}.
\end{proof}
Since LP~\eqref{lp-1} is an infinite-dimensional problem, for practical computation we need  consider finite-dimensional relaxations of this problem. 
Recall that $r_j = \lceil d_j/2\rceil$, for all $j=1,\dots,m$. 
Let us define $d_{\min}^1 := \max \{1, r_1,\dots,r_m \}$.
In practice, fix an integer $d \geq d_{\min}^1$ and consider the following semidefinite program (SDP):
\begin{equation}
\label{sdp-1}
\begin{array}{rl}
\rho_{d}=\displaystyle\sup_{\y, \v_j}& \{\,y_0 \,:\\
\mbox{s.t.}& 
\quad (n+\vert\alpha\vert) \, L_{\y}(\x^\alpha)   = \displaystyle\sum_{j=1}^m d_j\,b_j\, L_{\v_j}(\x^\alpha ) ,\quad \vert\alpha\vert\leq 2d  \,, \\
& \quad \M_d(\y^{\B}) \succeq \M_d(\y)\succeq0 \,, \: \M_{d-r_j}((b_j-g_j)\,\y) \geq 0 \,, \\
& \quad \M_d(\v_j)\succeq0 \,, \: \M_{d-r_j}((b_j-g_j)\,\v_j) = 0 \,,\quad j=1,\dots,m \,,\\
& \quad \M_{d-r_l}((b_l-g_l)\,\v_j)) \succeq 0 \,, \quad l\neq j \,, \quad l,j=1,\dots,m \,\}\,,
\end{array}
\end{equation}
where $\y=(y_\alpha)_{\alpha\in\N^n_{2d}}$, and $\v_j=(v_{j,\alpha})_{\alpha\in\N^n_{2d}}$, $j=1,\ldots,m$.

Of course $(\rho_d)_{d\in\N}$ is a monotone non increasing sequence and the next result shows that by solving the hierarchy of semidefinite programs \eqref{sdp-1}, one recovers the desired solution asymptotically.

\begin{thm}
\label{th-first-step}
Let $(\phi_1,\ldots,\phi_m)$ be as in~\eqref{def-phi-j}.
For each $d \geq d_{\min}^1$, the semidefinite program \eqref{second-step} has an optimal solution $(\y^d,\v_1^d,\dots,\v_m^d)$. 
In addition 
\begin{align}
\label{th-first-step-1}
\lim_{d\to\infty}\,y^d_{\alpha}\,= & \,\int_{\om}\x^\alpha\,d \lambda ,\quad\forall \alpha\in\N^n \,, \\
\lim_{d\to\infty}\, v^d_{j, \alpha}\,= & \,\int_{\om_j}\x^\alpha\,d \phi_j,\quad\forall \alpha\in\N^n;\quad  j=1,\ldots,m.
\end{align}
In particular, as $d\to\infty$, $y_0^d \,\downarrow\,\rho\,=\, \vol (\om)$.
\end{thm}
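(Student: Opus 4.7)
The plan is to argue in three stages: first, verify that the SDP \eqref{sdp-1} is feasible with an attained optimum; second, extract via a diagonal compactness argument a subsequential limit of optimal solutions whose entries define genuine moment sequences; third, identify this limit with $(\lambda_\om, \phi_1, \ldots, \phi_m)$ by invoking the uniqueness part of Theorem~\ref{first-boundary}.

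For the first stage, the truncated moment sequences of $(\lambda_\om, \phi_1, \ldots, \phi_m)$ provide a feasible point thanks to Theorem~\ref{first-boundary} together with identity \eqref{stokes-homog}, so $\rho_d \geq \vol(\om)$. The constraint $\M_d(\y^\B) \succeq \M_d(\y) \succeq 0$ forces $0 \leq y_{2\alpha} \leq y^\B_{2\alpha} \leq 2^n$, and the Cauchy--Schwarz inequality $y_\alpha^2 \leq y_0\, y_{2\alpha}$ (a consequence of PSDness of $\M_d(\y)$) yields $|y_\alpha| \leq 2^n$ for every $\alpha$. Reading the linear constraint at even multi-indices, together with $v_{j,2\alpha} \geq 0$ and $b_j,d_j > 0$, gives $v_{j,2\alpha} \leq (n+2|\alpha|)\,2^n / (d_j b_j)$, and a second Cauchy--Schwarz bounds $|v_{j,\alpha}|$ by a quantity of polynomial growth in $|\alpha|$. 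The feasible set is therefore closed and coordinatewise bounded, so the linear objective attains its supremum.

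For the second stage, for each fixed $\alpha$ the scalar sequences $(y^d_\alpha)$ and $(v^d_{j,\alpha})$ (defined for $2d \geq |\alpha|$) are uniformly bounded, so a standard Cantor diagonal extraction produces a subsequence $(d_k)$ and limits $y^*_\alpha$, $v^*_{j,\alpha}$ for every $\alpha \in \N^n$. Passing to pointwise limits preserves every PSD inequality, every equality $\M_{d-r_j}((b_j-g_j)\v^*_j) = 0$, and the linear relation at each $\alpha$. The uniform bound $|y^*_\alpha| \leq 2^n$ yields Carleman's condition, so Lemma~\ref{carleman} produces a unique representing measure $\mu^*$ for $\y^*$; combining the localizing constraints on $\y^*$ with Theorem~\ref{th:bounded-1} confines ${\rm supp}(\mu^*)$ to $\om$. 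For each $\v^*_j$, the polynomial growth of its moments is subsumed by the hypothesis of Theorem~\ref{th:bounded-1} (taking, say, $M=2$), providing a unique representing measure $\mu^*_j$; the equality $\M_{d-r_j}((b_j-g_j)\v^*_j) = 0$ then forces $b_j - g_j = 0$ on ${\rm supp}(\mu^*_j)$, which together with the remaining localizing PSD constraints places ${\rm supp}(\mu^*_j) \subset \om_j$.

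Finally, passing to the limit in the linear constraint of \eqref{sdp-1} shows $(\mu^*, \mu^*_1, \ldots, \mu^*_m)$ is feasible for \eqref{lp-1}, with objective $\mu^*(\om) = y^*_0 = \lim_k \rho_{d_k}$. The sequence $(\rho_d)$ is non-increasing in $d$ (enlarging $d$ only adds constraints) and bounded below by $\vol(\om)$, so it converges to some $\rho_\infty \geq \vol(\om)$; conversely feasibility for \eqref{lp-1} forces $\mu^*(\om) \leq \vol(\om)$. Hence $(\mu^*, \mu^*_j)$ is optimal for \eqref{lp-1}, and the uniqueness part of Theorem~\ref{first-boundary} identifies it with $(\lambda_\om, \phi_1, \ldots, \phi_m)$. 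Every subsequential limit being the same forces the full sequences to converge, giving \eqref{th-first-step-1} and the monotone convergence $y_0^d \downarrow \vol(\om)$. The main technical delicacy is the polynomial (rather than uniform) growth of $|v^d_{j,\alpha}|$ in $|\alpha|$, which prevents a naive bounded-moment argument and makes Theorem~\ref{th:bounded-1} with $M > 1$ the essential compactness tool for recovering the $\mu_j^*$.
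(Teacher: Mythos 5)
Your proof is correct and follows essentially the same route as the paper's: compactness of the truncated feasible set (via bounding moments using $\M_d(\y) \preceq \M_d(\y^\B)$ and the linear Stokes constraint), extraction of a coordinatewise convergent subsequence, passage to the limit in all constraints, recovery of representing measures supported on $\om$ and $\om_j$ via Theorem~\ref{th:bounded-1}, and finally identification with $(\lambda_\om,\phi_1,\ldots,\phi_m)$ through the uniqueness part of Theorem~\ref{first-boundary}. The only differences are cosmetic: the paper packages the diagonal extraction as Banach--Alaoglu on scaled sequences in $\ell_\infty$, and it applies Theorem~\ref{th:bounded-1} directly for existence of the measure $\mu^*$ whereas you route through Carleman's condition first; both arguments close the same way.
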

For ease of exposition the proof is postponed to \S \ref{proof-3.4}.
\begin{rem}
In the case where one already knows moments $\y=(y_\alpha)_{\alpha\in\N^n}$ of the Lebesgue $\lambda_\om$ on $\om$ (e.g. from measurements), then
in \eqref{lp-1}, the left-hand-side in the moment equality constraints
is now the constant $(n+\vert\alpha\vert)\,y_\alpha$. 
One then replaces the criterion $\sup\int_\om d\mu$ with  e.g.
$\sup\sum_j\mu_j(\om_j)$ or $\inf\sum_j\mu_j(\om_j)$. In fact, under Assumption \ref{ass-2}  the feasible set is the singleton $(\phi_1,\ldots,\phi_m)$. 

The same modification is done in the semidefinite relaxations \eqref{sdp-1} and if one chooses  $\sup_{\v_j}\sum_j v_{j,0}$ as criterion then in Theorem \ref{th-first-step}, as $d\to\infty$, \[\displaystyle\sum_{j=1}^m v^d_{j,0}\,\downarrow\,\displaystyle\sum_{j=1}^m\phi_j(\om)\,=\,\lim_{d\to\infty}\rho_d.\]
On the other hand, if one chooses $\inf_{\v_j}\sum_j v_{j,0}$ as criterion then as $d\to\infty$, \[\displaystyle\sum_{j=1}^m v^d_{j,0}\,\uparrow\,\displaystyle\sum_{j=1}^m\phi_j(\om)\,=\,\lim_{d\to\infty}\rho_d.\]
\end{rem}

\subsection*{Step 2} 
In a second step we extract the boundary measure $\sigma_j$ on $\om_j$ 
from the measure $\phi_j$, for every $j=1,\ldots,m$. To do so we use
its moments $\v_j=(v_{j,\alpha})_{\alpha\in\N^n}$, obtained in Step 1.
For each $j=1,\ldots,m$, define the set $\Theta_j\subset\om_j\times \R_+$ by:
\begin{equation}
\label{big-theta}
\Theta_j :=\,
\{ \, (\x,z) \in \om_j \times \R_+: \underbrace{z^2-\Vert \nabla g_j(\x)\Vert^2}_{\theta_j(\x,z)}=0\,\},\quad j=1,\ldots,m \,.
\end{equation}
Observe that if $z^2=\Vert \nabla g_j(\x)\Vert^2$ and $z\geq0$, then $z=\Vert\nabla g_j(\x)\Vert$.
So let $\psi_j$ be a measure on $\Theta_j$ with marginal $\psi_{j,\x}=\phi_j$ on $\om_j$,
and conditional $\hat{\psi}_j(dz\vert \x)$ on $\R_+$. Then disintegrating $\psi_j$ yields:
\begin{eqnarray}
\label{disintegration}
\int_{\Theta_j} \x^\alpha\,z\,d\psi_j(\x,z)&=&\int_{\om_j}\x^\alpha\,\left(\int_{\R_+}z\,\hat{\psi}_j(dz\vert\x)\right)\,d\phi_j(\x)
\nonumber \\
&=&\int_{\om_j}\x^\alpha\,\Vert \nabla g_j(\x)\Vert\,d\phi_j(\x)\\
\nonumber
&=&\int_{\om_j}\x^\alpha\,d\sigma_j(\x),\quad\forall\alpha\in\N^n\quad\mbox{[by \eqref{def-phi-j}]} \,.
\end{eqnarray}

Recall that $\ell_j={\rm deg}(t_j)/2={\rm deg}(\theta_j)/2= d_j-1$ and let
$(v_{j,\alpha})_{\alpha\in\N^n}$ be all moments of $\phi_j$ obtained in step 1. As $\om_j$ is compact, let $\tau >\sup_{\x\in\om_j}\Vert \nabla g_j(\x)\Vert^2$,
so that one may and will impose the additional redundant constraint  $z^2\leq \tau$.
Then for each $j=1,\ldots,m$, consider the 
hierarchy of semidefinite programs indexed by $d\in\N$:
\begin{equation}
\label{second-step}
\begin{array}{rl}
\overline{\rho}_{j,d}=\displaystyle\sup_{\u}& \{\, u_{0,1}:\\
\mbox{s.t.}& u_{\alpha,0}=v_{j,\alpha},\quad \vert\alpha\vert\leq 2d \,,\\
&\M_{d}(\u)\,\succeq\,0 \,, \:\M_{d-\ell_j}(\theta_j\,\u)\,=\,0 \,,\\
&\M_{d-1}((\tau-z^2)\,\u)\succeq0 \,,\:\M_{d-1}(z\,\u)\succeq0 \,\},
\end{array}
\end{equation}
and
\begin{equation}
\label{second-step-inf}
\begin{array}{rl}
\underline{\rho}_{j,d}=\displaystyle\inf_{\u}& \{\, u_{0,1}:\\
\mbox{s.t.}& u_{\alpha,0}=v_{j,\alpha},\quad \vert\alpha\vert\leq 2d \,,\\
&\M_{d}(\u)\,\succeq\,0 \,, \:\M_{d-\ell_j}(\theta_j\,\u)\,=\,0 \,,\\
&\M_{d-1}((\tau-z^2)\,\u)\succeq0 \,,\:\M_{d-1}(z\,\u)\succeq0 \,\},
\end{array}
\end{equation}
where $\u=(u_{\alpha,k})_{(\alpha,k)\in\N^{n+1}_{2d}}$. 
Let $d_{\min}^2 := \max \{1,r_1,\dots,r_m,\ell_j\}$.

\begin{thm}
\label{th-second-step}
If SDP~\eqref{second-step} (resp. SDP \eqref{second-step-inf}) has a feasible solution for each $d \geq d_{\min}^2$, then it has an optimal solution $\u^d=(u^d_{\alpha,k})$ (resp. $\w^d=(w^d_{\alpha,k})$). 
In addition:
\begin{equation}
\label{th-second-step-1}
\lim_{d\to\infty}\,u^d_{\alpha,1}\,=\,\lim_{d\to\infty}\,w^d_{\alpha,1}\,=\,\int_{\om_j}\x^\alpha\,d\sigma_j,\quad\forall \alpha\in\N^n.
\end{equation}
In particular, $\overline{\rho}_{j,d}=u^d_{0,1}\downarrow \sigma_j(\om)$ and $\underline{\rho}_{j,d}=u^d_{0,1}\uparrow \sigma_j(\om)$, 
as $d\to\infty$.
\end{thm}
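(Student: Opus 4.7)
The plan is to split the theorem into two parts: (i) compactness of the feasible set at each relaxation order $d\geq d_{\min}^2$, which gives existence of optima, and (ii) a Bolzano-Weierstrass/truncated-moment-problem argument showing that any limit of optimizers represents a measure on $\Theta_j$ whose $\x$-marginal is $\phi_j$. Once this is set up, the disintegration identity \eqref{disintegration} directly yields $\lim_d u^d_{\alpha,1}=\int_{\om_j}\x^\alpha\,d\sigma_j$.

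For step (i), the equalities $u_{\alpha,0}=v_{j,\alpha}$ fix all entries with $k=0$, and these are bounded by $v_{j,0}$ because $\om_j\subset(-1,1)^n$. The localizing constraint $\M_{d-1}((\tau-z^2)\,\u)\succeq0$ yields inductively $u_{0,2k}\leq \tau^k v_{j,0}$, and then $\M_d(\u)\succeq0$ combined with Cauchy-Schwarz gives $(u_{\alpha,k})^2\leq u_{2\alpha,0}\,u_{0,2k}\leq v_{j,0}^{\,2}\tau^k$. Hence the feasible set is a closed, bounded subset of a finite-dimensional Euclidean space, and the linear objective $u_{0,1}$ attains both its sup and inf.

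For step (ii), I would extend each optimizer $\u^d$ by zero-padding to an infinite sequence indexed by $\N^{n+1}$ and extract a subsequence converging entrywise to some $\u^*$, relying on the uniform bound $|u^d_{\alpha,k}|\leq v_{j,0}\,\max(1,\sqrt{\tau})^{|\alpha|+k}$. Passing to the limit in every SDP constraint, $\u^*$ satisfies $\M_d(\u^*)\succeq0$, $\M_d(\theta_j\,\u^*)=0$, $\M_d((\tau-z^2)\u^*)\succeq0$, $\M_d(z\,\u^*)\succeq0$ for all $d$, so Theorem \ref{th:bounded-1} produces a representing measure $\psi^*$ of $\u^*$, compactly supported in $\{z\geq0\}\cap\{z^2\leq\tau\}\cap\{\theta_j=0\}$, i.e., on $\{(\x,z):\,z=\|\nabla g_j(\x)\|,\;0\leq z\leq\sqrt{\tau}\}$. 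The $\x$-marginal of $\psi^*$ is then compactly supported and shares all moments with $\phi_j$, hence equals $\phi_j$ by moment determinacy; consequently $\psi^*$ lives on $\Theta_j$ and \eqref{disintegration} gives
\[
u^*_{\alpha,1}\,=\,\int_{\Theta_j}\x^\alpha z\,d\psi^*\,=\,\int_{\om_j}\x^\alpha\,\|\nabla g_j(\x)\|\,d\phi_j(\x)\,=\,\int_{\om_j}\x^\alpha\,d\sigma_j,\quad \alpha\in\N^n.
\]
Since this limit is independent of the extracted subsequence, the full sequence converges, giving \eqref{th-second-step-1}; the same reasoning applies verbatim to $\w^d$. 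Monotonicity $\overline{\rho}_{j,d}\downarrow\sigma_j(\om)$ and $\underline{\rho}_{j,d}\uparrow\sigma_j(\om)$ then follows from the fact that any solution feasible at order $d+1$ truncates to a feasible solution at order $d$, so the feasible ranges of the linear objective $u_{0,1}$ are nested. The most delicate point I anticipate is identifying the $\x$-marginal of $\psi^*$ as exactly $\phi_j$ rather than only a moment-matching measure supported elsewhere; this is precisely where the artificial but redundant bound $z^2\leq\tau$ becomes essential, as it confines $\psi^*$ to a compact set and unlocks moment determinacy.
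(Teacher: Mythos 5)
Your proposal is correct and follows essentially the same two-step strategy as the paper: compactness of the feasible set to get existence, then zero-padding, subsequence extraction, passage to the limit in the moment/localizing constraints, construction of a representing measure supported on $\Theta_j$, and disintegration. The only cosmetic difference is that you invoke Theorem~\ref{th:bounded-1} directly (using the geometric bound $|u_{\alpha,k}|\leq v_{j,0}\max(1,\sqrt{\tau})^{|\alpha|+k}$) to obtain the compactly supported representing measure, whereas the paper first appeals to the multivariate Carleman condition (Lemma~\ref{carleman}) for existence and determinacy, and only then uses Theorem~\ref{th:bounded-1} and \cite[Theorem 3.2(a)]{Las11} for the support characterization; both routes are valid and lead to the same disintegration argument.
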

For ease of exposition the proof is postponed to \S \ref{proof-3.5}. 

\begin{rem}
\label{feasible}
If $\v_j=(v_{j,\alpha})_{\alpha\in\N^n_{2d}}$ is the vector of moments of $\phi_j$ on $\om_j$ (up to degree $2d$),
then \eqref{second-step} and \eqref{second-step-inf} have a feasible solution. 
It suffices to consider the moments $\u=(u_{\alpha,k})_{(\alpha,k)\in\N^{n+1}_{2d}}$ of the measure 
$d\phi(\x,z)=\delta_{\Vert \nabla g_j(\x)\Vert} d\phi_j(\x)$, where $\delta_\bullet$ is the Dirac measure. Indeed such a vector $\u$ is feasible by construction. In fact in this case, an infinite sequence $\u=(u_{\alpha,k})_{(\alpha,k)\in\N^{n+1}}$ that satisfies \emph{all} constraints of \eqref{second-step} is unique and is the moment sequence of the measure $\delta_{\Vert \nabla_j(\x)\Vert}\,\phi_j$, and so \eqref{th-second-step-1} holds.
\end{rem}

Theorem \ref{th-second-step} states that by solving the hierarchy of semidefinite programs \eqref{second-step}, one may approximate as closely as desired 
any finite number of moments of the surface measure $\sigma_j$. In addition, depending on whether one maximizes or minimizes the same criterion $u_{0,1}$, one obtains a monotone sequence of upper bounds or lower bounds that converges to $\sigma_j(\om_j)$.  
After summing up, this allows to obtain smaller and smaller approximations
\[\sum_{j=1}^m \underline{\rho}_{j,d}\,\leq\,\sigma(\partial\om)\,\leq\,\sum_{j=1}^m\overline{\rho}_{j,d}\]
of $\partial\om$ (length if $n=2$ or area if $n=3$).\\

We next treat the general case where the $g_j$'s are not necessarily homogeneous. Recall \eqref{stokes-1a} which reads for all $\alpha \in \N^n$:
\begin{align}
\label{stokes-convex}
& (n+\vert\alpha\vert)\,\int_\om \x^\alpha\, d\x\,     
= \sum_{j=1}^m\int_{\om_j}\x^\alpha\,\underbrace{\langle \x,\nabla g_j(\x)\rangle}_{=:q_j(\x)} \,\underbrace{\frac{d\sigma_j(\x)}{\Vert \nabla g_j(\x)\Vert}}_{d\phi_j(\x)} \,.
\end{align}
In the remaining part of the paper we make the following assumption:
\begin{assumption}
\label{ass-exp}
Let $\phi_j\in\mathscr{M}_+(\om_j)$ and $q_j\in\R[\x]$ be as in \eqref{stokes-convex} and recall that 
$\Vert \nabla g_j\Vert\neq0$ on $\om_j$ for all $j=1,\ldots,m$.
For every $j=1,\ldots,m$, let $\x\mapsto h_j(\x):=\prod_{k\neq j}(b_k-g_k(\x))$.
Then for all $j=1,\ldots,m$, $\sigma(\{\x\in\om_j: q_j(\x)\,h_j(\x)=0\})=0$.
\end{assumption}
\subsection{The general convex case}
\label{sec:convex}
Let $\om$ be as in \eqref{set-om} and assume that $\om$ is convex and $g_j(0)<b_j$ for all $j=1,\ldots,m$.
This time we can exploit the fact that $\langle \vec{n}_\x,\x\rangle\geq0$ for all $\x\in\partial\om$, and therefore:
\begin{equation}
\label{convexity}
\x\mapsto q_j(\x)\,:=\,\langle \x,\nabla g_j(\x)\rangle\,\geq0,\qquad\forall \x\in\om_j \,, \quad j=1,\ldots,m \,.
\end{equation}

\if{
In particular for $k = 0$, one has for all $\alpha \in \N$:
\begin{equation}
\label{stokes-convex-0}
(n+\vert\alpha\vert)\,\int_\om \x^\alpha\,d\x\, =\,\sum_{j=1}^m\int_{\om_j}\x^\alpha\, q_j(\x) \,d\phi_j(\x) \,.
\end{equation}
}\fi
Incidentally \eqref{stokes-convex} has the following nice interpretation:
\begin{cor}
Let $\om$ be as in \eqref{set-om} with $g_j(0)<b_j$, $j=1,\ldots,m$, and assume that all $g_j$'s are continuously differentiable. If the $g_j$'s are all positively homogeneous functions or if $\om$ is convex then
the real sequence $\hat{\y}=(\hat{y}_\alpha)_{\alpha\in\N^n}$ where 
$\hat{y}_\alpha=(n+\vert\alpha\vert)\,y_\alpha$, $\alpha\in\N^n$, has a representing measure 
$d\phi=f\,d\sigma$ on $\partial\om$, hence
absolutely continuous w.r.t. $\sigma$, and with density:
\begin{equation}
f(\x)\,=\,\frac{\langle \x,\nabla g_j(\x)\rangle}{\Vert \nabla g_j(\x)\Vert},\quad \x\in\om_j, \quad j=1,\ldots,m \,.
\end{equation}
\end{cor}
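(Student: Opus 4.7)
The plan is to read off the representing measure directly from Stokes' identity \eqref{stokes-convex} and then verify nonnegativity of its density in each of the two stated regimes. Concretely, I would begin by rewriting \eqref{stokes-convex} as
\[
(n+|\alpha|)\,y_\alpha \,=\, \int_{\partial\om} \x^\alpha\, d\phi(\x), \qquad
d\phi\,:=\,\sum_{j=1}^m \frac{q_j(\x)}{\Vert\nabla g_j(\x)\Vert}\,d\sigma_j,
\]
where $q_j(\x)=\langle\x,\nabla g_j(\x)\rangle$ and where by Assumption~\ref{ass-2} the restrictions $\sigma_j$ assemble (up to a $\sigma$-null set) into the full Hausdorff measure $\sigma$ on $\partial\om$. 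Since \eqref{nondegeneracy} guarantees $\Vert\nabla g_j\Vert>0$ on $\om_j$, the only thing to verify is that $\phi$ is a \emph{nonnegative} Borel measure; given that, the moment identity shows that $\phi$ represents $\hat{\y}$, and the formula $d\phi=f\,d\sigma$ with the stated piecewise density on each $\om_j$ gives absolute continuity with respect to $\sigma$ automatically.

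Next I would check nonnegativity of $q_j$ on $\om_j$ in each case separately.

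\textbf{Homogeneous case.} If $g_j$ is positively homogeneous of degree $d_j\geq 1$, Euler's identity gives $\langle\x,\nabla g_j(\x)\rangle=d_j\,g_j(\x)$. On $\om_j$ this equals $d_j b_j$. Since $g_j(0)=0<b_j$ by hypothesis, each $b_j>0$, hence $q_j\equiv d_j b_j>0$ on $\om_j$.

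\textbf{Convex case.} The condition $g_j(0)<b_j$ for every $j$ places the origin in the interior of $\om$. For $\x\in\om_j$, the outward unit normal is $\vec{n}_\x=\nabla g_j(\x)/\Vert\nabla g_j(\x)\Vert$ (since $\nabla g_j$ points in the direction of increasing $g_j$, i.e.\ out of $\om$ through the face $\{g_j=b_j\}$). Convexity of $\om$ together with $0\in{\rm int}(\om)$ yields a supporting hyperplane at $\x$ whose outer half-space excludes $0$, so $\langle \x,\vec{n}_\x\rangle\geq 0$. Multiplying by $\Vert\nabla g_j(\x)\Vert>0$ gives $q_j(\x)\geq 0$ on $\om_j$.

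In both cases the density $f$ is pointwise nonnegative on each $\om_j$, so $\phi$ is a nonnegative Borel measure on $\partial\om$, and combining the pieces gives $d\phi=f\,d\sigma$ with $f(\x)=\langle\x,\nabla g_j(\x)\rangle/\Vert\nabla g_j(\x)\Vert$ on $\om_j$. The only subtlety (and the only place where real work hides) is the convex case, where one has to argue that $\nabla g_j$ indeed plays the role of the outward normal to the face $\om_j$ without assuming that $g_j$ itself is convex; the hypothesis $g_j(0)<b_j$ together with the nondegeneracy \eqref{nondegeneracy} is exactly what makes this geometric identification go through.
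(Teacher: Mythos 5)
Your proof is correct and follows essentially the same argument the paper leaves implicit: the corollary is read directly off Stokes' identity \eqref{stokes-convex}, with nonnegativity of $q_j$ on $\om_j$ supplied by Euler's identity in the homogeneous case and by \eqref{convexity} (supporting hyperplane through $\x\in\partial\om$ separating it from $0\in{\rm int}(\om)$) in the convex case. The paper states the corollary without a separate proof precisely because it is a restatement of the two displays preceding it, and your write-up fills in exactly those same ingredients.
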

Hence the infinite-dimensional LP of Step 1 in Section~\ref{sec:homogeneous} is now replaced with:
\begin{align}
\label{lp-convex-1}
\nonumber
\rho=\displaystyle\sup_{\mu,\mu_j} &  \:\{\, \int_{\om} d\mu\,:\\ \nonumber
\text{s.t.} & \ \mu \leq \lambda_\B  \,, \\ 
& \ (n+\vert\alpha\vert) \displaystyle\int_{\om} \x^\alpha  \, d\mu  = \displaystyle\sum_{j=1}^m \displaystyle\int_{\om_j} \x^\alpha  \, q_j(\x) \, d\mu_j, \, \alpha\in\N^n \,, \\ \nonumber
& \ \mu\in\mathscr{M}_+(\om) \,, \mu_j\in\mathscr{M}_+(\om_j),\:j=1,\ldots,m \, .
\end{align}
\if{
\begin{equation}
\label{lp-convex-1}
\begin{array}{rl}
\rho=\displaystyle\sup_{\mu,\mu_j}&\{ \,\displaystyle\int d\mu: \:
(n+\vert\alpha\vert)\,y_\alpha\,=\,
\displaystyle\sum_{j=1}^m \displaystyle\int \x^\alpha\,q_j(\x)\,d\mu_j,\quad\forall\alpha\in\N^n\\
&\mu_j(\om_k)=0,\quad k\neq j;\:j=1,\ldots,m\\
&\mu_j\in\mathscr{M}(\om_j)_+,\:j=1,\ldots,m\,\}.
\end{array}
\end{equation}
}\fi
Under Assumption \ref{ass-exp}, as  before one can show that this infinite-dimensional LP~\eqref{lp-convex-1} has a unique solution $(\lambda_{\om}, \phi_1,\ldots,\phi_m)$ with $d\phi_j=\Vert \nabla g_j(\x)\Vert^{-1}\,d\sigma_j$ for all $j=1,\ldots,m$.
The proof, very similar to that of Theorem \ref{th-first-step} is omitted.

Finally, to recover $\sigma_j$ from $\phi_j$, we again solve the hierarchy of semidefinite programs  \eqref{second-step} of Step 2.

\subsection{Discussion}

One may also approximate moments of the $\sigma_j$'s by solving a \emph{single} hierarchy that combines
the constraints of \eqref{sdp-1} and \eqref{second-step}, that is, by solving the following hierarchy of semidefinite programs indexed by $d\in\N$:
\begin{equation}
\label{sdp-single-step}
\begin{array}{rl}
\rho_{d}=\displaystyle\sup_{\y, \v_j,\u^j}& \{\,y_0 \,:\\
\mbox{s.t.}& 
\quad (n+\vert\alpha\vert) \, L_{\y}(\x^\alpha)   = \displaystyle\sum_{j=1}^m d_j\,b_j\, L_{\v_j}(\x^\alpha ) ,\quad \vert\alpha\vert\leq 2d  \,, \\
& \quad \M_d(\y^{\B}) \succeq \M_d(\y)\succeq0 \,, \: \M_{d-r_j}((b_j-g_j)\,\y) \geq 0 \,, \\
& \quad \M_d(\v_j)\succeq0 \,, \: \M_{d-r_j}((b_j-g_j)\,\v_j) = 0 \,,\quad j\leq m \,,\\
& \quad \M_{d-r_l}((b_l-g_l)\,\v_j)) \succeq 0 \,, \quad l\neq j \,, \quad j\leq m \,\\
&\\
& \quad u^j_{\alpha,0}=v_{j,\alpha},\qquad \vert\alpha\vert\leq 2d \,,\:j\leq m\,,\\
&\quad \M_{d}(\u^j)\,\succeq\,0 \,, \:\M_{d-\ell_j}(\theta_j\,\u^j)\,=\,0 \,,\quad j\leq m\,,\\
&\quad \M_{d-1}((\tau-z^2)\,\u^j)\succeq0 \,,\:\M_{d-1}(z\,\u^j)\succeq0 \,,\quad j\leq m\,\}\,.
\end{array}
\end{equation}
Indeed recall Remark \ref{feasible}. If $\y=(y_\alpha)_{\alpha\in\N}$ is the moment sequence
of $\lambda_\om$ then an infinite sequence $\u^j=(u^j_{\alpha,k})_{(\alpha,k)\in\N^{n+1}}$ that satisfies all constraints of \eqref{second-step}, for all $d\in\N$, is unique and is the moment sequence of the measure $\delta_{\Vert\nabla g_j(\x)\Vert}\phi_j$ on $\om_j$. 
So the semidefinite program \eqref{sdp-single-step} has always a feasible solution. 
However it has $m$ additional 
unknown moment sequences $(\u^j)_{j\leq m}$, hence is harder than \eqref{sdp-1} to solve, and the numerical results can be less accurate (see Section~\ref{sec:benchs}).
If $m$ is small it still may be an interesting alternative to the two-step procedure.
On the other hand, one looses the upper and lower bounds $\overline{\rho}_{j,d}$ and $\underline{\rho}_{j,d}$ obtained in \eqref{second-step} and \eqref{second-step-inf} respectively.  
However, for the general case treated in the next section we \emph{cannot} avoid a two-step procedure.

\subsection{The general case}
\label{sec:general}

The general case is more delicate because in the integrand of the right-hand-side
of Stokes' identity \eqref{stokes-convex}, the term 
$q_j(\x):=\langle \x,\nabla g_j(\x)\rangle$ may change sign on $\om_j$. 
So to handle the case where $\langle\vec{n}_\x,\x\rangle <0$ on some Borel set
$B\subset\partial\om$ with $\sigma(B)>0$,
then write $\phi_j=\phi_j^++\phi_j^-$, $j=1,\ldots,m$,  with:
\begin{eqnarray*}
{\rm supp}(\phi_j^+)&=&\{\x\in\om_j:\:q_j(\x)\,\geq0\}\,=:\,\om_j^+\\
{\rm supp}(\phi_j^-)&=&\{\x\in\om_j:\:q_j(\x)\,<0\}\,=:\,\om_j^-.\end{eqnarray*}

We now assume that the moments $\y=(y_\alpha)_{\alpha\in\N^n}$ of $\lambda_\om$ are already available (e.g. from some measurements) or they have
been already computed, e.g. by solving the infinite-dimensional LP \eqref{volume}. In practice in both cases 
only approximations of finitely many of them are available either by measurements or by an approximation algorithm (e.g., as the one described in \cite{HLS09vol}).

\subsection*{Step 1}
The infinite-dimensional LP of Step 1 in \eqref{lp-convex-1} now becomes:
\begin{align}
\label{lp-general-1}
\rho=\displaystyle\inf_{\mu^\pm_j}&  \,\Bigg\{\, \sum_{i=1}^n\sum_{j=1}^m 
\left(
\int_{\om_j^+}d\mu_j^++\int_{\om_j^-} d\mu^-_j \right) \,:\\ \nonumber
\text{s.t.} & \ (n+\vert\alpha\vert) \,y_\alpha   =
\displaystyle\sum_{j=1}^m \displaystyle\int_{\om_j^\pm} \x^\alpha  \, q_j(\x)  \,d\mu^\pm_j, \quad \alpha\in\N^n  \,, \\ \nonumber
& \ \mu^\pm_j\in\mathscr{M}_+(\om_j^\pm),\:j=1,\ldots,m \,\Bigg\} .
\end{align}
\begin{thm}
\label{th:signed}
Let $\om\subset\B=(-1,1)^n$ and let Assumption \ref{ass-exp} hold. Then
$(\phi_1^\pm,\ldots,\phi_m^\pm)$ is the unique optimal solution of \eqref{lp-general-1}.
\end{thm}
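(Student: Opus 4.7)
The plan is to mirror the argument of Theorem \ref{first-boundary} while carefully tracking the sign decomposition $\phi_j=\phi_j^++\phi_j^-$. First I would verify that $(\phi_1^\pm,\ldots,\phi_m^\pm)$ is feasible for \eqref{lp-general-1}: by Stokes' identity \eqref{stokes-convex}, for every $\alpha\in\N^n$,
\[
(n+\vert\alpha\vert)\,y_\alpha \,=\, \sum_{j=1}^m \int_{\om_j} \x^\alpha\,q_j(\x)\,d\phi_j(\x) \,=\, \sum_{j=1}^m\Bigl(\int_{\om_j^+}\x^\alpha q_j\,d\phi_j^+ + \int_{\om_j^-}\x^\alpha q_j\,d\phi_j^-\Bigr),
\]
since $\phi_j^\pm$ are by construction the restrictions of $\phi_j$ to $\om_j^\pm$, and these sets cover $\om_j$.

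For optimality and uniqueness, let $(\mu_1^\pm,\ldots,\mu_m^\pm)$ be any feasible solution. Setting $\mu_j:=\mu_j^++\mu_j^-$ and $\phi_j:=\phi_j^++\phi_j^-$ (viewed as Borel measures on $\om_j$) and subtracting Stokes' identity from the feasibility constraint yields
\[
\sum_{j=1}^m \int_{\om_j} \x^\alpha\,q_j\,d(\mu_j - \phi_j) \,=\, 0,\qquad \forall\alpha\in\N^n.
\]
To isolate the $k$-th summand I would test against $\x^\alpha h_k(\x)$ with $h_k(\x):=\prod_{l\neq k}(b_l-g_l(\x))$ as in Assumption \ref{ass-exp}. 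For $j\neq k$ the factor $b_j-g_j$ vanishes on $\om_j$, so $h_k\equiv 0$ on $\om_j$ and only the $j=k$ term survives:
\[
\int_{\om_k}\x^\alpha\,h_k(\x)\,q_k(\x)\,d(\mu_k-\phi_k)\,=\,0,\qquad \forall\alpha\in\N^n,\quad k=1,\ldots,m.
\]
By Stone--Weierstrass density of polynomials in $C(\om_k)$ (using compactness of $\om_k$), this forces $h_k q_k\,d(\mu_k-\phi_k)=0$ as a signed Borel measure on $\om_k$.

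By Assumption \ref{ass-exp}, $N_k:=\{\x\in\om_k:h_k(\x)q_k(\x)=0\}$ is $\sigma$-negligible, hence $\phi_k(N_k)=0$ since $\phi_k$ is absolutely continuous with respect to $\sigma$. Consequently $\mu_k$ and $\phi_k$ agree on $\om_k\setminus N_k$; because $\om_k^+\setminus N_k\subset\{q_k>0\}$ and $\om_k^-\setminus N_k\subset\{q_k<0\}$ are disjoint, one also obtains $\mu_k^\pm=\phi_k^\pm$ on $\om_k^\pm\setminus N_k$. Splitting mass accordingly,
\[
\mu_k^\pm(\om_k^\pm) \,=\, \phi_k^\pm(\om_k^\pm) + \mu_k^\pm(\om_k^\pm\cap N_k) \,\geq\, \phi_k^\pm(\om_k^\pm),
\]
with equality if and only if $\mu_k^\pm(N_k)=0$. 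Summing over $k$ and over the two signs, the objective of \eqref{lp-general-1} satisfies $\sum_k(\mu_k^+(\om_k^+)+\mu_k^-(\om_k^-))\geq \sum_k(\phi_k^+(\om_k^+)+\phi_k^-(\om_k^-))$, with equality exactly when $\mu_k^\pm=\phi_k^\pm$ for all $k$; this proves both optimality and uniqueness.

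The main delicacy is that the unknowns $\mu_k^\pm$ are not a priori absolutely continuous with respect to $\sigma$, so the measure identity $h_k q_k\,d(\mu_k-\phi_k)=0$ only pins $\mu_k$ down modulo a nonnegative contribution concentrated on the $\sigma$-null set $N_k$. Excluding such a singular perturbation is precisely where the $\inf$ in the objective of \eqref{lp-general-1} plays its role; without it, feasible solutions would form a large family differing from $(\phi_1^\pm,\ldots,\phi_m^\pm)$ by measures supported on the null sets $N_k$.
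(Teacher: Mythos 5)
Your proof is correct and rests on the same core idea as the paper's: subtract the Stokes identity from the feasibility constraint, test against $h_k$-type polynomials to isolate the $k$-th term, invoke density of polynomials on the compact $\om_k$, and then use Assumption~\ref{ass-exp} to control the exceptional set. There are, however, two genuine differences in organization that are worth recording. First, the paper's proof splits into an existence step (a minimizing sequence, uniform mass bounds, and weak-$\star$ compactness give an optimal $(\mu_j^\pm)$) followed by an identification step that derives a contradiction from $\eta_j(\om_j)>0$ using optimality of that argmin. You bypass this entirely: your inequality $\sum_k\bigl(\mu_k^+(\om_k^+)+\mu_k^-(\om_k^-)\bigr)\geq\sum_k\bigl(\phi_k^+(\om_k^+)+\phi_k^-(\om_k^-)\bigr)$ holds for every feasible point, with equality iff $\mu_k^\pm=\phi_k^\pm$, so feasibility of $(\phi_1^\pm,\ldots,\phi_m^\pm)$ alone forces it to be the unique minimizer and no compactness argument is needed. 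Second, the paper tests against $\x^\beta q_j(\x)h_j(\x)$, producing $q_j^2 h_j\,d(\mu_j-\phi_j)=0$, whereas you test against $\x^\alpha h_k(\x)$, producing $q_k h_k\,d(\mu_k-\phi_k)=0$; since $\{q_j^2 h_j=0\}=\{q_j h_j=0\}$ both give the same null set $N_k$, and neither argument actually needs the test polynomial to be sign-definite (the key sign fact is $\mu_k|_{N_k}\geq0$ because $\phi_k(N_k)=0$, not the sign of the test function). Your closing remark identifying the $\inf$ as the device that kills the singular part $\mu_k|_{N_k}$ makes explicit something that the paper leaves implicit in the contradiction step.

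Two small points you should still spell out if you write this up. When passing from $\mu_k=\phi_k$ on $\om_k\setminus N_k$ to $\mu_k^\pm=\phi_k^\pm$ on $\om_k^\pm\setminus N_k$, you are implicitly using that $\mathrm{supp}(\mu_k^+)\subset\om_k^+$, $\mathrm{supp}(\mu_k^-)\subset\om_k^-$, and that $\om_k^+$ and $\om_k^-$ are disjoint, so that $\mu_k|_{\om_k^\pm}=\mu_k^\pm$ (and likewise for $\phi_k$); this is correct but worth a sentence. Also, the application of Stone--Weierstrass requires that $\nu_k:=h_k q_k\,(\mu_k-\phi_k)$ be a \emph{finite} signed measure; this holds since $\mu_k,\phi_k$ are finite and $h_kq_k$ is bounded on the compact $\om_k$.
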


For ease of exposition the proof is postponed to \S \ref{proof-signed}. To obtain moments of the measures $(\phi^\pm_j)_{j\leq m}$,  consider the following hierarchy of semidefinite programs, the analogue for the general case of \eqref{sdp-1} for the homogeneous case. 
\begin{equation}
\label{sdp-2}
\begin{array}{rl}
\rho_{d}=\displaystyle\inf_{(\v_j)}& \{\,\displaystyle\sum_{j=1}^m v_{j,0}^\pm\,:\\
\mbox{s.t.}& 
(n+\vert\alpha\vert) \, y_\alpha   = \displaystyle\sum_{j=1}^m L_{\v^\pm_j}(q_j\,\x^\alpha ) ,\quad \vert\alpha\vert\leq 2d  \,, \\
& \M_d(\v^\pm_j)\succeq0 \,, \: \M_{d-r_j}((b_j-g_j)\,\v^\pm_j) = 0 \,,\quad j=1,\dots,m \,,\\
& \M_{d-r_l}((b_l-g_l)\,\v^\pm_j)) \succeq 0 \,, \quad l\neq j \,, \quad l,j=1,\dots,m \,,\\
&\M_{d-1}((1-x_i^2)\,\v_j^\pm)\,\succeq0,\quad i=1,\ldots,n\,;\:j=1,\ldots,m\,,\\
&\M_{d-\ell_j}(q_j\,\v_j^+)\succeq0\,,\:
\M_{d-\ell_j}(-q_j\,\v_j^-)\succeq0\,\},
\end{array}
\end{equation}
where $\v^\pm_j=(v^\pm_{j,\alpha})_{\alpha\in\N^n_{2d}}$, and
$\ell_j=\lceil ({\rm deg} \,q_j)/2\rceil$, $j=1,\ldots,m$.

Of course $(\rho_d)_{d\in\N}$ is a monotone non increasing sequence and the next result shows that by solving the hierarchy of semidefinite programs \eqref{sdp-1}, one recovers the desired solution asymptotically.

\begin{thm}
\label{th:general-sdp-1}
For every $d$ the semidefinite program \eqref{sdp-2} has an optimal solution $(\v_j^{\pm,d})_{j\leq m}$. In addition:
\begin{equation}
\label{th:general-sdp-1-1}
\lim_{d\to\infty} v_{j,\alpha}^{\pm,d}\,=\,\int_{\om_j^\pm} \x^\alpha\,d\phi^\pm_j,\quad j=1,\ldots,m.
\end{equation}
\end{thm}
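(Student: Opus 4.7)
The plan is to mirror the convergence argument of Theorem~\ref{th-first-step}, adapted to the signed decomposition $\phi_j=\phi_j^++\phi_j^-$. I would proceed in four stages: exhibit a feasible solution, bound the moments of any feasible solution so that an SDP optimum exists at each level $d$, extract a limit as $d\to\infty$ via a diagonal argument, and finally identify the limit with the unique optimal solution of \eqref{lp-general-1} provided by Theorem~\ref{th:signed}.

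For feasibility, the truncations of the moment sequences of $(\phi_j^\pm)_{j\leq m}$ satisfy every constraint of \eqref{sdp-2}: the moment equalities are those of \eqref{lp-general-1}; the identities $\M_{d-r_j}((b_j-g_j)\,\v_j^\pm)=0$ follow from ${\rm supp}(\phi_j^\pm)\subset\{g_j=b_j\}$; the inequalities $\M_{d-r_l}((b_l-g_l)\,\v_j^\pm)\succeq 0$ from ${\rm supp}(\phi_j^\pm)\subset\om$; the conditions with $1-x_i^2$ from $\om\subset\B$; and $\M_{d-\ell_j}(\pm q_j\,\v_j^\pm)\succeq 0$ from ${\rm supp}(\phi_j^\pm)\subset\{\pm q_j\geq 0\}$. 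Hence $\rho_d\leq\sum_{j}\phi_j^\pm(\om_j^\pm)<\infty$ for every $d$.

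Because $\om\subset(-1,1)^n$, the constraints $\M_d(\v_j^\pm)\succeq0$ and $\M_{d-1}((1-x_i^2)\,\v_j^\pm)\succeq0$ imply the standard bound $|v_{j,\alpha}^\pm|\leq v_{j,0}^\pm$ for all $|\alpha|\leq 2d$. Together with the finite upper bound on $\sum_j v_{j,0}^\pm$ coming from minimization against the feasible reference point, each sublevel set of the objective is compact in finite dimensions, so an optimal solution $(\v_j^{\pm,d})$ exists. Extending each $\v_j^{\pm,d}$ arbitrarily to a sequence indexed by $\N^n$, the uniform bound $|v_{j,\alpha}^{\pm,d}|\leq\sum_k\phi_k^\pm(\om_k^\pm)$ (valid for all $|\alpha|\leq 2d$) together with a diagonal extraction yield a subsequence converging entrywise to some $\v_j^{\pm,*}\in\R^{\N^n}$. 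All the moment and localizing matrix conditions in \eqref{sdp-2} pass to this limit at every order, and Theorem~\ref{th:bounded-1} then delivers unique representing measures $\phi_j^{\pm,*}\in\mathscr{M}_+([-1,1]^n)$ with supports contained in $\om_j\cap\{\pm q_j\geq 0\}$.

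Passing to the limit in the moment equality constraints shows that $(\phi_j^{\pm,*})_{j\leq m}$ is feasible for \eqref{lp-general-1}; entrywise convergence of the zero-th moments together with optimality of $(\phi_j^\pm)$ in Theorem~\ref{th:signed} forces $\sum_j\phi_j^{\pm,*}(\om_j^\pm)=\sum_j\phi_j^\pm(\om_j^\pm)$, and the uniqueness assertion of Theorem~\ref{th:signed} then gives $\phi_j^{\pm,*}=\phi_j^\pm$ for all $j$. Since the limit is uniquely determined, the extracted subsequence may be replaced by the full sequence, yielding \eqref{th:general-sdp-1-1}. The main difficulty I expect is that the $\pm$ decomposition is ambiguous on the boundary set $\{q_j=0\}$ and the support of $\phi_j^{\pm,*}$ can only be controlled up to closure; Assumption~\ref{ass-exp} rescues the argument by guaranteeing that this boundary set is $\sigma$-null, so the decomposition is well defined on the part of $\om_j$ that carries mass.
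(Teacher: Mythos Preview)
Your proposal is correct and follows essentially the same four-stage strategy as the paper's proof: feasibility, compactness/existence of optimizers, weak-$\star$ extraction, and identification of the limit via the uniqueness in Theorem~\ref{th:signed}. The only noteworthy difference is the tool you invoke to produce representing measures for the limit sequences: you appeal to Theorem~\ref{th:bounded-1} (via the uniform moment bound coming from $\M_{d-1}((1-x_i^2)\,\v_j^\pm)\succeq0$), whereas the paper instead observes that the quadratic module generated by $\{\pm(b_j-g_j),(b_k-g_k)_{k\neq j},\pm q_j,(1-x_i^2)_i\}$ is Archimedean and applies Putinar's Positivstellensatz---both routes yield the same conclusion, and your closing remark about the $\{q_j=0\}$ ambiguity being absorbed by Assumption~\ref{ass-exp} is exactly how the paper's uniqueness argument (Theorem~\ref{th:signed}) resolves it as well.
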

For ease of exposition a proof is postponed to \S \ref{proof-general-sdp-1}. Next we consider the problem of recovering
$\sigma$ from moments of the $\phi_j^\pm$.

\subsection*{Step 2} For each $j=1,\ldots,m$, let
$\Theta_j\subset\om_j\times \R_+$ be the set defined in \eqref{big-theta} and recall that
if $z^2=\Vert \nabla g_j(\x)\Vert^2$ and $z\geq0$, then $z=\Vert\nabla g_j(\x)\Vert$.
So let $\psi^\pm_j$ be a measure on $\Theta_j$ with marginal $\psi^\pm_{j,\x}=\phi^\pm_j$ on $\om^\pm_j$,
and conditional $\hat{\psi}^\pm_j(dz\vert \x)$ on $\R_+$. Then disintegrating $\psi^\pm_j$ yields:
\begin{eqnarray}
\label{disintegration_sign}
\int_{\Theta_j} \x^\alpha\,z\,d\psi^\pm_j(\x,z)&=&\int_{\om^\pm_j}\x^\alpha\,\left(\int_{\R_+}z\,\hat{\psi}^\pm_j(dz\vert\x)\right)\,d\phi^\pm_j(\x)
\nonumber \\
&=&\int_{\om^\pm_j}\x^\alpha\,\Vert \nabla g_j(\x)\Vert\,d\phi^\pm_j(\x)\\
\nonumber
&=&\int_{\om^\pm_j}\x^\alpha\,d\sigma_j(\x),\quad\forall\alpha\in\N^n\quad\mbox{[by \eqref{def-phi-j}]} \,.
\end{eqnarray}
Therefore:
\[\int_{\Theta_j} \x^\alpha\,z\,d\psi^+_j(\x,z)
+\int_{\Theta_j} \x^\alpha\,z\,d\psi^-_j(\x,z)\,=\,
\int_{\om_j}\x^\alpha\,d\sigma_j(\x),\quad\forall\alpha\in\N^n.\]

So in Step 2, for every $j=1,\ldots,m$, the goal is to obtain the two moment sequences  $\u^\pm=(u^\pm_{\alpha,k})_{(\alpha,k)\in\N^{n+1}}$. Recall that $\tau>\sup_{\x\in\om_j}\Vert\nabla g_i(\x)\Vert$. To obtain $\u^+$, \eqref{second-step} now reads:
\begin{equation}
\label{second-step-general-+}
\begin{array}{rl}
\underline{\rho}_{j,d}^+=\displaystyle\inf_{\u}& \{\,u_{0,1}:\\
\mbox{s.t.}& u_{\alpha,0}=\phi^+_{j,\alpha},\quad \vert\alpha\vert\leq 2d \,,\\
&\M_d(\u)\,\succeq0,\,,\M_{d-\ell_j}(\theta_j\,\u)\,=\,0  \,, \\
& \M_{d-1}(z\,\u)\succeq0 \,,\,
\M_{d-1}((\tau-z^2)\,\u)\succeq0\,\}.
\end{array}\end{equation}

Similarly, to obtain $\u^-$, one solves:
\begin{equation}
\label{second-step-general--}
\begin{array}{rl}
\underline{\rho}_{j,d}^-=\displaystyle\inf_{\u}& \{\,u_{0,1}:\\
\mbox{s.t.}& u_{\alpha,0}=\phi^-_{j,\alpha},\quad \vert\alpha\vert\leq 2d \,,\\
&\M_d(\u)\,\succeq0,\,,\M_{d-\ell_j}(\theta_j\,\u)\,=\,0  \,, \\
& \M_{d-1}(z\,\u)\succeq0 \,,\,
\M_{d-1}((\tau-z^2)\,\u)\succeq0\,\}.
\end{array}\end{equation}
One may also solve:
\begin{equation}
\label{second-step-general-+-sup}
\begin{array}{rl}
\overline{\rho}_{j,d}^+=\displaystyle\sup_{\u}& \{\,u_{0,1}:\\
\mbox{s.t.}& u_{\alpha,0}=\phi^+_{j,\alpha},\quad \vert\alpha\vert\leq 2d \,,\\
&\M_d(\u)\,\succeq0,\,,\M_{d-\ell_j}(\theta_j\,\u)\,=\,0  \,, \\
& \M_{d-1}(z\,\u)\succeq0 \,,\,
\M_{d-1}((\tau-z^2)\,\u)\succeq0\,\},
\end{array}\end{equation}
and
\begin{equation}
\label{second-step-general---sup}
\begin{array}{rl}
\overline{\rho}_{j,d}^-=\displaystyle\sup_{\u}& \{\,u_{0,1}:\\
\mbox{s.t.}& u_{\alpha,0}=\phi^-_{j,\alpha},\quad \vert\alpha\vert\leq 2d \,,\\
&\M_d(\u)\,\succeq0,\,,\M_{d-\ell_j}(\theta_j\,\u)\,=\,0  \,, \\
& \M_{d-1}(z\,\u)\succeq0 \,,\,
\M_{d-1}((\tau-z^2)\,\u)\succeq0\,\}.
\end{array}\end{equation}

But all those programs are exactly like \eqref{second-step}
and so Theorem \ref{th-second-step} applies. For instance,
let $\u^{+,d}$ (resp. $\u^{-,d}$) be an optimal solution of 
\eqref{second-step-general-+} (resp. \eqref{second-step-general--}). Then by Theorem
\ref{th-second-step},
\[\lim_{d\to\infty}u_{\alpha,1}^{+,d}+u_{\alpha,1}^{-,d}
\,=\,\int_{\om_j^+}\x^\alpha\,
d\sigma_j+\int_{\om_j^-}\x^\alpha\,
d\sigma_j\,=\,\int_{\om_j}\x^\alpha\,d\sigma_j,\]
for all $\alpha\in\N^n$, and moreover $(u_{0,1}^{+,d}+u^{-,d}_{0,1})\uparrow \sigma_j(\om_j)$ as $d\to\infty$.

Similarly, let $\u^{+,d}$ (resp. $\u^{-,d}$) be an optimal solution of 
\eqref{second-step-general-+-sup} (resp. \eqref{second-step-general---sup}). Then by Theorem
\ref{th-second-step} again,
\[\lim_{d\to\infty}u_{\alpha,1}^{+,d}+u_{\alpha,1}^{-,d}
\,=\,\int_{\om_j^+}\x^\alpha\,
d\sigma_j+\int_{\om_j^-}\x^\alpha\,
d\sigma_j\,=\,\int_{\om_j}\x^\alpha\,d\sigma_j,\]
for all $\alpha\in\N^n$, and moreover $(u_{0,1}^{+,d}+u^{-,d}_{0,1})\downarrow \sigma_j(\om_j)$ as $d\to\infty$.

\section{Numerical Experiments}
\label{sec:benchs}
Here, we illustrate our theoretical framework on simple basic compact semi-algebraic sets. 
Our numerical experiments are performed with the Gloptipoly 3~\cite{gloptipoly3} library available in Matlab
We used Mosek 8~\cite{moseksoft} to solve the SDP problems. 
All results were obtained on an Intel Xeon(R) E-2176M CPU (2.70GHz $\times$ 12) with 32Gb of RAM.

%
We first consider the two-dimensional unit disk $\om = \{\x \in \R^2 : \| \x \|_2^2 \leq 1 \}$, corresponding to $g_1 = \| \x \|_2^2$ and $b_1 = 1$. 
The moments of the Hausdorff boundary measure of $\partial \om$ are approximated after solving SDP~\eqref{sdp-1} (first step) and SDP~\eqref{second-step}-\eqref{second-step-inf} (second step), successively.
The second step allows one to compute the absolute error gap between the optimal value $\overline{\rho}_{1,d}$ of SDP~\eqref{second-step} and $\underline{\rho}_{1,d}$ of SDP~\eqref{second-step-inf}, which respectively provide an upper bound and a lower bound on the perimeter of the boundary of $\om$.
To ensure that the moments of the uniform measure on $\om$ are approximated with good accuracy, we solve the first step with relatively large value of $d$, namely with $d = 10$.
These moment approximations are then used as input in the SDP relaxations related to the second step.
At $d = 2$, we already obtain $\overline{\rho}_{1,d} = 6.28319 \gtrsim 2 \pi \gtrsim \underline{\rho}_{1,d} = 6.28317$.

\if{
\begin{table}[!ht]
\caption{Relative and absolute errors obtained when approximating the mass of the Hausdorff measure on the boundary of the unit disk}
\label{table:disk}
\begin{tabular}{c|ccccc}
$d$ & 1 & 2 & 3 & 4 & 5 \\
\hline
relative error & 54\% & 1.86 \% & 0.09 \% & 0.08 \% & 0.01 \%\\
absolute error & & & & & 
\end{tabular}
\end{table}
}\fi
Then, we repeat the same experiments on the so-called "TV screen", defined by $\om = \{\x \in \R^2 : x_1^4 + x_2^4 \leq 1 \}$. 
The approximate value of the perimeter of the boundary is given by numerical integration of $2 \times \int_{-1}^1 \sqrt[4]{1-t^4} d t \simeq 7.0177$. 
In Table~\ref{table:tvscreen}, we display the relative errors in percentage when approximating the perimeter of the boundary of  $\om$, namely the mass of the boundary measure, for increasing values of the relaxation order $d$.
We also display the absolute error gap between the optimal value $\overline{\rho}_{1,d}$ of SDP~\eqref{second-step} and $\underline{\rho}_{1,d}$ of SDP-\eqref{second-step-inf}.
Table~\ref{table:tvscreen} indicates that the quality of the approximations increases significantly when the relaxation order grows.
We also implemented SDP~\eqref{sdp-single-step}, i.e., the relaxation corresponding to a single hierarchy. 
In this case, the approximation of the perimeter is less accurate as we obtain a relative error of $17.9\%$  for $d = 3$, $5.98\%$ for $d = 4$ and $5.65 \%$ for $d = 5$. 
With higher relaxation orders, we encountered numerical issues, certainly due to the growing number of SDP constraints and SDP variables.

\begin{table}[!ht]
\caption{Relative and absolute errors obtained when approximating the mass of the Hausdorff measure on the boundary of the TV screen}
\label{table:tvscreen}
\begin{tabular}{c|ccc}
$d$ & 3 & 4 & 5  \\
\hline
relative error & 0.18 \% & 0.02 \% & 0.01 \%\\
absolute error & 4.37 & 1.5$\text{e}$-3 & 1.4$\text{e}$-5
\end{tabular}
\end{table}

Eventually, we consider the non-convex two-dimensional ``star-shaped'' curve, defined to be the boundary of $\om = \{\x \in \R^2 : x_1^4 + x_2^4 - 1.7 x_1^2 x_2^2 \leq 1 \}$, displayed in Figure~\ref{fig:star}.
Again, using numerical integration scheme, we obtain an approximate value of the perimeter equal to $11.3668$.
By contrast with the two previous examples, Table~\ref{table:star} shows that this is slightly more difficult to obtain accurate approximations for the mass of the boundary measure of this non-convex set, as we need to compute the fifth order relaxation to get a relative error below 0.1 \%.
As for the previous example, the bounds obtained via SDP~\eqref{sdp-single-step} are less accurate since we obtain a relative error of $24.1\%$  for $d = 3$ and $0.36 \%$ for $d = 4$. 

\begin{figure}[ht]
\includegraphics[scale=1]{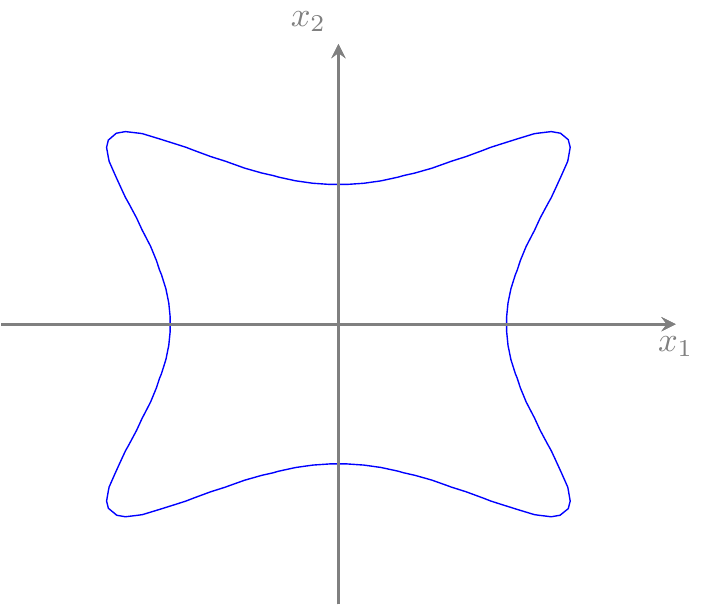}
 \caption{The non-convex two-dimensional ``star-shaped'' curve, given by $\partial \om = \{\x \in \R^2 : x_1^4 + x_2^4 - 1.7 x_1^2 x_2^2 = 1 \}$} \label{fig:star}
\end{figure}

\if{
\begin{figure}[ht]
\begin{tikzpicture}
\begin{axis}[
            xmin=-2, xmax=2, 
            ymin=-2, ymax=2,
            axis x line = center, 
            axis y line = center,
            axis line style = {thick, gray},
            xtick = \empty,
            ytick = \empty,
            xlabel = {$x_1$},
            every axis x label/.append style = {below, gray},
            ylabel = {$x_2$},
            every axis y label/.append style = {above left, gray},
            legend style = {nodes=right},
            legend pos = north east,
            clip mode = individual,
            view = {0}{90},    
        ]
	\addplot3[contour ={
                levels=0, labels=false,
            }, 
            samples=20,
            ] 
            {x^4 + y^4 - 1.7* x^2 * y^2 - 1};
\end{axis}
\end{tikzpicture}
 \caption{The non-convex two-dimensional ``star-shaped'' curve, given by $\partial \om = \{\x \in \R^2 : x_1^4 + x_2^4 - 1.7 x_1^2 x_2^2 = 1 \}$} \label{fig:star}
\end{figure}
}\fi
\begin{table}[!ht]
\caption{Relative and absolute errors obtained when approximating the mass of the Hausdorff measure on the boundary of a ``star-shaped'' curve}
\label{table:star}
\begin{tabular}{c|ccc}
$d$ & 3 & 4 & 5  \\
\hline
relative error &    1.06\% &  0.13\% &  0.09\%\\
absolute error &  0.87 & 0.26 & 0.04
\end{tabular}
\end{table}

\section{Conclusion}
\label{sec:conclusion}
We have provided a general numerical scheme to approximate moments of the Hausdorff boundary measure of a basic compact semi-algebraic set, as closely as desired. To the best of our knowledge, it is the first such contribution at least  at this level of generality. However it should be noted that the size of the matrices involved in the SDP relaxations grows up rapidly with the relaxation order. Therefore so far this approach is  limited to basic semi-algebraic sets of modest dimension. 
On the other hand, if the polynomials involved in the description of the semi-algebraic set follow a specific sparsity pattern, then the approach developed in ~\cite{tacchi2019exploiting} for volume computation in the presence of sparsity, might offer a way to overcome this scalability issue in order to handle semi-algebraic sets of higher dimensions.

\section{Appendix}

\subsection{Proof of Theorem \ref{th-first-step}}
\label{proof-3.4}
\begin{proof}
The feasible set of SDP~\eqref{sdp-1} is closed. In addition the constraint 
$\M_d(\y)\preceq\M_d(\y^\B)$ implies that $L_\y(x_i^{2d})\leq L_{\y^\B}(x_i^{2d})\leq 2^n$ for all $i=1,\ldots,n$,
because $\B=(-1,1)^n$. In addition, $L_\y(1)\leq L_{\y^\B}(1)=2^n$, and so by \cite[Proposition 3.6]{lasserre2009moments}, $\vert y_\alpha\vert\leq 2^n$ for all
$\alpha\in\N^n_{2d}$. Moreover:
\[(n+2d)\,L_\y(x_i^{2d})\,=\,\sum_{j=1}^m d_j\,b_j\,L_{\v_j}(x_i^{2d}),\quad i=1,\ldots,n,\]
and $L_\y(1)=\sum_{j=1}^m d_j\,b_j\,L_{\v_j}(1)$, $j=1,\ldots,n$.

Therefore $\max_i L_{\v_j}(x_i^{2d})\,\leq \frac{n+2d}{d_j\,b_j}\,\max_i L_\y(x_i^{2d})$,
and $L_{\v_j}(1)\leq n\,2^n/(b_jd_j)$ for all $j=1,\ldots,n$. But then by \cite[Proposition 3.6]{lasserre2009moments},
$\sup\,\{\,\vert v_{j,\alpha}\vert\,:\: j\leq m;\, \alpha\in\N^n_{2d}\} <\tau_d$,
where $\tau_d:=2^n\,\max_j\frac{n+2d}{b_jd_j}$. Hence the feasible set is closed and bounded, thus compact, which implies that SDP~\eqref{sdp-1} has an optimal solution $(\y^d,\v_1^d,\dots,\v_m^d)$.

Complete the finite vectors $\y^d,\v_j^d$ with zeros to make them bounded infinite sequences
and define  the sequence $\hat{\y}^d=(\hat{y}^d_\alpha)_{\alpha\in\N^n}$ with $\hat{y}^d_\alpha=y_\alpha/\tau_\ell$ for all 
$2\ell-1\leq\vert\alpha\vert\leq 2\ell$, so that $\sup_\alpha \vert\hat{y}^d_\alpha\vert\leq 1$. Hence every sequence $\hat{\y}^d$ is an element of the unit ball of the space $\ell_\infty$ of uniformly bounded sequences
(also the topological dual of $\ell_1$). 
We can do a similar scaling for the $\v_j^d$, $j=1,\ldots,m$.
As a consequence of Banach-Alaoglu theorem~\cite{Ash72RAP} applied to the unit ball of $\ell_\infty$, there exists a subsequence $\{d_s\}_{s \in \N} \subset \N$ and elements $\hat{\y},\hat{\v}_j$ in $\ell_\infty$ such that 
$\hat{\y}^{d_s} \to \hat{\y}$ and $\hat{\v}_j^{d_s} \to \hat{\v}_j$ ($j=1,\dots,m$) as $s \to \infty$, for the weak-$\star$ topology $\sigma(\ell^\infty,\ell_1)$. In particular,
defining $\y=(y_\alpha)$ with $y_\alpha:=\tau_\ell\,\hat{y}_\alpha$, and
$\v_j=(v_{j,\alpha})$ with $v_{j,\alpha}:=\tau_\ell\,\hat{v}_{j,\alpha}$, for all $2\ell-1\leq\vert\alpha\vert\leq 2\ell$, and all $\ell\in\N$:
\begin{eqnarray}
\label{conv-1}
\lim_{s\to\infty}\,y^{d_s}_{\alpha}&=&y_\alpha,\quad\forall \alpha\in\N^n \,, \\
\label{conv-2}
\lim_{s\to\infty}\, v^{d_s}_{j, \alpha}&= & \,v_{j,\alpha},\quad\forall \alpha\in\N^n;\quad j=1,\ldots,m.
\end{eqnarray}
This in turn implies
\[0\,\preceq\,\M_d(\y)\,\preceq\,\M_d(\y^\B)\,,\quad d\in\N,\]
and for every $j=1,\ldots,m$,
\[\M_d(\v_j)\,\succeq\,0,\quad\M_d((b_j-g_j)\,\v_j)=0,\quad \M_d((b_k-g_k)\,\v_j)\succeq0,\quad k\neq j,\:\:d\in\N,\]
as well as $\M_{d-\ell_j}((t_j-a_j)\,\v_j)\succeq0$ for all $j=1,\ldots,m$ and $d\in\N$.

As $\sup_\alpha\vert\y_\alpha\vert\leq 2^n$, then by Theorem \ref{th:bounded-1}, $\y$ has a representing measure $\mu$ on $[-1,1]^n\cap\{\x: g_1(\x)\leq b_1\}\cdots\cap\{\x: g_m(\x)\leq b_m\}$, that is, on $\om$.
Next,  observe that
\[(n+\vert\alpha\vert) \, L_{\y}(\x^\alpha)   = \displaystyle\sum_{j=1}^m d_j\,b_j\, L_{\v_j}(\x^\alpha ),\quad \forall \alpha\in\N^n,\]
and in particular, 
\[(n+2d) \, L_{\y}(\x^{2d}_i)   = \displaystyle\sum_{j=1}^m d_j\,b_j\, L_{\v_j}(\x_i^{2d}),\quad \forall i=1,\ldots,n;\:d=0,1,\ldots\]
There exists $M>1$ such that $M^{2d}> (n+2d)\,2^n/(\min_jbjd_j)$ for all $d\in\N$ and therefore
\[\sup_{i=1,\ldots,n} L_{\v_j}(x_i^{2d})\,<\,M^{2d}\,=:\,\gamma_d,\quad \forall d=0,1,\ldots\]
As $\M_d(\v_j)\succeq0$ for all $d$, then by \cite[Proposition 3.6]{lasserre2009moments}, $\vert v_{j,\alpha}\vert\leq \gamma_d^{\vert\alpha\vert/2d}=M^{\vert\alpha\vert}$ for all $\vert\alpha\vert\leq 2d$,
and therefore for all $\alpha\in\N^n$. 
In addition as $\M_d((b_k-g_k)\,\v_j)\succeq0$ for all $k\neq j$ and $\M_d((b_j-g_j)\,\v_j)=0$ for all $d$, then by Theorem \ref{th:bounded-1}, $\mu_j$ is supported on $[-M,M]^n\cap\{\x:g_j(\x)=b_j\}\bigcap_{k\neq j}\{\x: g_k(\x)\leq b_k\}$, that is, $\mu_j$ is supported on $\om_j$. Hence 
$(\mu,\mu_1,\ldots,\mu_m)$ is feasible for the infinite-dimensional LP \eqref{lp-1}. On the other hand
\[\rho\, \leq \,\lim_{s\to\infty}\rho_{d_s}\,=\,\lim_{s\to\infty}y^{d_s}_0\,=\,y_0\,=\,\mu(\om),\]
which implies that $(\mu,\mu_1,\ldots,\mu_j)$ is an optimal solution of \eqref{lp-1}. As 
$(\lambda_\om,\phi_1,\ldots,\phi_m)$ is the unique optimal solution of 
\eqref{lp-1}, one must have $\mu=\lambda_\om$ and $\mu_j=\phi_j$ for all $j=1,\ldots,m$.
This in turn implies that all converging subsequences \eqref{conv-1}-\eqref{conv-2} must have the same limit, that is, the whole sequence $(\y^d,\v_1^d,\ldots,\v_m^d)_{d\in\N}$ converges. In other words \eqref{conv-1}-\eqref{conv-2} reads
\begin{eqnarray*}
\lim_{d\to\infty}\,y^{d}_{\alpha}&=&y_\alpha\,=\,\int_\om \x^\alpha\,d\lambda,\quad\forall \alpha\in\N^n \,, \\
\lim_{d\to\infty}\, v^{d}_{j, \alpha}&= &v_{j,\alpha}\,=\,\int_{\om_j}\x^\alpha d\phi_j,\quad\forall \alpha\in\N^n;\quad j=1,\ldots,m,
\end{eqnarray*}
which is the desired result \eqref{th-first-step-1}
\end{proof}

\subsection{Proof of Theorem \ref{th-second-step}}
\label{proof-3.5}
\begin{proof}
We only consider \eqref{second-step} as the proof for \eqref{second-step-inf} is almost a verbatim copy. 
The proof is similar to that of Theorem \ref{th-first-step}.
Let $\u=(u_{\alpha,k})$ be a feasible solution (assumed to exist). 
As $u_{\alpha,0}=v_{j,\alpha}$ for all $\alpha\in\N^n_{2d}$ it follows
that $L_{\u}(x_i^{2d})=L_{\v_j}(x_i^{2d})=\int_{\om_j} x_i^{2d}\,d\phi_j$, and $u_0=\phi_j(\om_j)$,
where $\v_j=(v_{j,\alpha})_{\alpha\in\N^n}$ is the sequence of moments of $\phi_j$, 
already computed in Step 1. 

Next, $\M_{d-1}((\tau-z^2)\,\u)\succeq0$ implies
$L_\u(z^{2d})\leq \tau^d$. Therefore as $\M_d(\u)\succeq0$,
by \cite[Proposition 3.6]{lasserre2009moments}
\[\vert u_{\alpha,k}\vert\,\leq\,
\max[\,\phi_j(\om_j)\,,\,\tau^d\,,\,L_{\v_j}(x_i^{2d})\,],\quad\forall (\alpha,k)\in\N^{n+1}_{2d}.\]
Therefore the feasible set is closed and bounded, hence compact, which implies that \eqref{second-step} has an optimal solution $\u^d$
with optimal value $\overline{\rho}_{j,d}=u^d_{0,1}$.

Next, complete the finite vector $\u^d$ with zeros to make it an infinite sequence $\u^d=(u^d_{\alpha,k})_{(\alpha,k)\in\N^{n+1}}$. 
As in the proof of Theorem \ref{th-first-step} there is a subsequence $(d_s)_{s\in\N}$ and an element $\u=(u_{\alpha,k})_{(\alpha,k)\in\N^{n+1}}$ such that
\begin{equation}
    \label{new-conv}
\lim_{s\to\infty}u^{d_s}_{\alpha,k}\,=\,u_{\alpha,k}\,,\quad \forall (\alpha,k)\in\N^{n+1}.\end{equation}
This in turn implies that $\M_d(\u)\succeq0$, $\M_{d-1}(z\,\u)\succeq0$, $\M_{d-\ell_j}(\theta_j\,\u)=0$, and
$\M_{d-1}((\tau-z^2)\,\u)\succeq0$, for all $d\in\N$. In addition, for all $d\in\N$:
\[L_\u(z^{2d})\leq \tau^d\,;\quad L_\u(x_i^{2d})\leq L_{\v_j}(x_i^{2d})\,=\,\int_{\om_j}x_i^{2d}\,d\phi_j,\quad i=1,\ldots,n.\]
Hence
\[\sum_{d=1}^\infty L_{\u}(z^{2d})^{-1/2d}\,=\,+\infty\,;\quad
\sum_{d=1}^\infty L_{\u}(x_i^{2d})^{-1/2d}\,=\,+\infty,\]
for all $i=1,\ldots,n$. That is, the multivariate Carleman condition \eqref{carleman-1} holds for the sequence $\u$, and by Lemma \ref{carleman}, it has a representing measure $\psi_j$ on $\R^{n+1}$ which is moment determinate. By construction:
\[\int \x^\alpha\,d\psi_j(\x,z)\,=\,\int_{\om_j}\x^\alpha\,d\phi_j,\quad\forall \alpha\in\N^n.\]
As $L_\u(z^{2d})\leq \tau^d$, $\M_d(\u)\succeq0$,  and $\M_d(z\,\u)\succeq0$ for all $d$, then by Theorem \ref{th:bounded-1},
$z\geq0$ and $z^2\leq\tau$ on the support of $\psi_j$. Hence ${\rm supp}(\psi_j)\subset \om_j\times\{z\geq0: z^2\leq \tau\}$. Similarly, as $\psi_j$ has compact support, by \cite[Theorem 3.2(a)]{Las11}, the constraints $\M_d(\theta_j\,\u)=0$ for all $d$,  imply that ${\rm supp}(\psi_j)\subset\{(\x,z): \theta_j(\x,z)=0\}$.
Hence disintegrating $\psi_j$ into its marginal $\phi_j$ on $\om_j$
and its conditional $\hat{\psi}_j(dz\vert\x)$ on $\R_+$ given $\x\in\om_j$, yields:
\begin{eqnarray*}
u_{\alpha,1}\,=\,\int \x^\alpha \,z\,d\psi_j(\x,z)&=&\int_{\om_j}\x^\alpha\,\left(\int_{\R_+}z\,\hat{\psi}_j(dz\vert\x)\right)\,d\phi_j(\x)\\
&=&\int_{\om_j}\x^\alpha \Vert\nabla g_j(\x)\Vert\,d\phi_j(\x)\\
&=&\int_{\om_j}\x^\alpha\,d\sigma_j(\x),\quad\forall\alpha\in\N^n.
\end{eqnarray*}
Hence by \eqref{new-conv} 
\[\lim_{s\to\infty}u^{d_s}_{\alpha,1}=\int_{\om_j}\x^\alpha\,d\sigma_j,\quad\forall\alpha\in\N^n.\]
But the right-hand-side is the same limit for all
converging subsequences and therefore the whole sequence converges, that is, \eqref{th-second-step-1} holds. In particular
$\overline{\rho}_{j,d}=u^d_{0,1}\downarrow u_{0,1}=\sigma_j(\om_j)$ as $d\to\infty$. 

Similarly in \eqref{second-step-inf}, but now  as one minimizes, $\underline{\rho}_{j,d}\uparrow u_{0,1}=\sigma_j(\om_j)$ as $d\to\infty$.
\end{proof}

\subsection{Proof of Theorem \ref{th:signed}}
\label{proof-signed}
\begin{proof}
Let $(\mu_1^{s,\pm},\ldots,\mu_m^{s,\pm})_{s\in\N}$ be a minimizing sequence of \eqref{lp-general-1}, with
\[a_s:=\sum_{i=1}^n\sum_{j=1}^m \left( \int_{\om_j^+} d\mu_j^{s,+}+\int_{\om_j^-} d\mu^{s,-}_j  \right) ,\]
so that $a_s\downarrow\rho$ as $s\to\infty$. 
As $\sup_s\mu^{s,\pm}_j(\om_j^{\pm})<a_0$ it follows that there is a subsequence $(s_t)_{t\in\N}$ and that there are Borel measures
$\mu_j^\pm\in\mathscr{M}_+(\om_j^\pm)$, $j=1,\ldots,m$, such that:
\begin{equation}
\label{conv-3}
\forall j=1,\ldots,m:\quad \lim_{t\to\infty}\,\int_{\om_j^\pm} f\,d\mu^{s_t,\pm}_j\,=\,\int_{\om_j^\pm} f\,d\mu^{\pm}_j,
\end{equation}
for all continuous functions $f$ on $\om_j^\pm$. In particular: with $\alpha\in\N^n$, fixed:
\[(n+\vert\alpha\vert) \,y_\alpha\,=\,\lim_{t\to\infty}
\displaystyle\sum_{j=1}^m \displaystyle\int_{\om_j^\pm} \x^\alpha  \,q_j(\x)\, d\mu^{s_t,\pm}_j\,=\,
\displaystyle\sum_{j=1}^m \displaystyle\int_{\om_j^\pm} \x^\alpha  \, q_j(\x)\,d\mu^{\pm}_j,\]
and $\rho=\sum_{j=1}^m\mu_j^\pm(\om_j)^\pm)$,
which shows that $(\mu_1^{\pm},\ldots,\mu_m^\pm)$ is an optimal solution of the LP \eqref{lp-general-1}.
It remains to prove that $\mu^\pm_j=\phi^\pm_j$ for every $j=1,\ldots,m$.

Observe that $(\phi^\pm_1;\ldots,\phi_m^\pm)$ is also feasible for \eqref{lp-general-1}. Indeed as
$\om$ is compact and $\phi_j(\om_j)<\infty$, then all moments of $\phi_j$ are well-defined
and \eqref{stokes-convex} holds.
Therefore one has
\begin{equation}
    \label{auxx}
  0\,=\,\displaystyle\sum_{j=1}^m \displaystyle\int_{\om_j^+} \x^\alpha  \, q_j(\x)  \,d(\mu^+_j-\phi^+_j)
+\displaystyle\sum_{j=1}^m \displaystyle\int_{\om_j^-} \x^\alpha  \, q_j(\x)  \,d(\mu_j^--\phi^-_j),
\end{equation}
for all $\alpha\in\N^n$.
Next, for every $j=1,\ldots,m$, let $\x\mapsto h_j(\x):=\prod_{k\neq j}(b_k-g_k(\x))$ so that $h_j$ vanishes on $\om_k$ for all $k\neq j$.
Letting $\mu_j:=\mu^+_j+\mu^-_j\in\mathscr{M}_+(\om_j)$ and $\phi_j:=\phi^+_j+\phi^-_j\in\mathscr{M}_+(\om_j)$, and using that for every $\beta\in\N^n$,
\[\x^\beta q_j(\x)\,h_j(\x)\,=\,\sum_\alpha r^\beta_\alpha \x^\alpha,\]
for some $(r^\beta_\alpha)$, then by \eqref{auxx}, 
\begin{eqnarray*}
0&=&\displaystyle\sum_{k=1}^m \displaystyle\int_{\om_k} \x^\beta  \, q_k(\x)\,q_j(\x)\,h_j(\x)  \,d(\mu_k-\phi_k), \quad \beta\in\N^n \\
&=&\displaystyle\int_{\om_j} \x^\beta  \, q_j(\x)^2\,h_j(\x)  \,d(\mu_j-\phi_j), \quad \forall\beta\in\N^n.
\end{eqnarray*}
Therefore let $\Theta_j:=\{\x\in\om_j: h_j(\x)\,q_j(\x)=0\}$. As the above holds for all $\beta\in\N^n$, then under Assumption \ref{ass-exp}, $\mu_j=\phi_j+\eta_j\geq\phi_j$ where $\eta_j\in\mathscr{M}_+(\om_j)$ and ${\rm supp}(\eta_j)\subset\Theta_j$. But then if $\eta_j(\om_j)>0$, 
\begin{eqnarray*}
\rho&=&\sum_{i=1}^n\sum_{j=1}^m \int_{\om_j^+} d\mu_j^++\int_{\om_j^-} d\mu^-_j\\
&=&\sum_{i=1}^n\sum_{j=1}^m \int_{\om_j} d\mu_j\,>\,\sum_{i=1}^n\sum_{j=1}^m \int_{\om_j}d\phi_j\,,
\end{eqnarray*}
which contradicts optimality of $(\mu^\pm_1,\ldots,\mu^\pm_m)$, and so $\mu_j^\pm=\phi_j^\pm$, $j=1,\ldots,m$.
\end{proof}

\subsection{Proof of Theorem \ref{th:general-sdp-1}}
\label{proof-general-sdp-1}
\begin{proof}
Let $(\v_j^{\pm,s})_{s\in\N}$ be a minimizing sequence with
$a\,:=\,\sum_{j=1}^m v^{\pm,0}_{j,0}$ and
so that $\lim_{s\to\infty}\,\sum_{j=1}^m v_{j,0}^{\pm,s}=\rho_d$.
Moreover, let $a\,:=\,\sum_{j=1}^m v^{\pm,0}_{j,0}$ so that
$\sup_jv_{j,0}^{\pm,s}<a$. Moreover the constraints $\M_{d-1}((1-x_i^2)\,\v_j^{\pm,s})\succeq0$ yield
\[L_{\v_j^{\pm,s}}(x_i^{2k})\leq\,v^{\pm,s}_{j,0}, \quad k=1,\ldots,d\,;\:i=1,\ldots,n\,;\:s=0,1,\ldots\]
As $\M_d(\v_j^{\pm,s})\succeq0$, then by \cite[Proposition 3.6]{lasserre2009moments},
\[\vert v_{j,\alpha}^{\pm,s}\vert \,<\, a,\quad \forall \alpha\in\N^n_{2d}\,;\quad j=1,\ldots,m.\]
Therefore  there is a subsequence $(s_k)_{k\in\N}$ and 
$(\v_1^\pm,\ldots,\v_m^\pm)$ such that
\[\lim_{k\to\infty} v_{j,\alpha}^{\pm,s_k}\,=\, v_{j,\alpha}^{\pm},\quad\forall \alpha\in\N^n_{2d};\quad j=1,\ldots,m.\]
This in turn implies that for every $j=1,\ldots,m$:
\begin{equation}
\label{aux44}
\M_d(\v_j^\pm)\,\succeq\,0;\:\M_d((b_j-g_j)\,\v_j^\pm)\,=\,0,
\end{equation}
\begin{equation}
    \label{aux45}
\M_d((b_l-g_l)\,\v_j^\pm)\,\succeq\,0,\:l\neq\,j;
\M_d(q_j\,\v^+_j)\,,\,\M_d(-q_j\,\v^-_j)\succeq0,\end{equation}
and also, $\M_{d-1}((1-x_i^2)\,\v_j^\pm)\succeq0$ for all $i=1,\ldots,n$, which shows that $(\v_j^\pm)$ is a feasible solution. Finally,
\[\rho_d\,=\,\lim_{k\to\infty}\sum_{j=1}^m v_{j,0}^{\pm,s_k} \,=\,
\sum_{j=1}^m v_{j,0}^\pm,\]
implies that $(\v_j^{\pm})_{j\leq m}$ is an optimal solution, and  $\alpha\in\N^n_{2d}$.

So for each $d$, let $(\v^{\pm,d}_j)_{j\leq m}$ be an optimal solution of \eqref{sdp-2} and complete each vector $\v_j^{\pm,d}$ with zeros to make it an infinite sequence $(v_{j,\alpha}^{\pm,d})_{\alpha\in\N^n}$ (still denoted $\v_j^{\pm,d}$ for notational convenience). From what precedes, note that $\vert v_{j,\alpha}^{\pm,d}\vert\leq v_{j,0}^{\pm,d}$ for all $\alpha\in\N^n_{2d}$. Therefore, as \eqref{sdp-2} is a relaxation of \eqref{lp-general-1} for every $d$:
\[\vert v_{j,\alpha}^{\pm,d}\vert\leq v_{j,0}^{\pm,d}\,\leq\,\rho\,,\quad \forall j=1,\ldots,m\,;\:\forall \alpha\in\N^n_{2d},\]
and hence for all $\alpha\in\N^n$, where $\rho$ is the optimal value of \eqref{lp-general-1}.

By a similar argument already used in the proof of Theorem~\ref{proof-3.5}, 
there exists a subsequence $(d_s)_{s\in\N}$ and sequences $\v_j^\pm=(v_{j,\alpha}^\pm)_{\alpha\in\N^n}$, $j=1,\ldots,m$, such that:
\begin{equation}
\label{conv-4}
    \lim_{s\to\infty}v_{j,\alpha}^{\pm,d_s}\,=\,
    v_{j,\alpha}^\pm,\quad \alpha\in\N^n;\:j=1,\ldots,m.
\end{equation}
In particular \eqref{aux44}-\eqref{aux45}  holds for $\v_j^\pm$, and also  $\M_{d-1}((1-x_i^2)\,\v_j^\pm)\succeq0$ for all $i=1,\ldots,n$.
and all $j=1,\ldots,m$. 
For each $j=1,\ldots,m$, the quadratic module
generated by the polynomials 
$\{\pm b_j-g_j,(b_k-g_k)_{k\neq j},q_j,(1-x_i^2)_{i=1,\ldots,n}\}$
(resp. $\{\pm b_j-g_j,(b_k-g_k)_{k\neq j},-q_j,(1-x_i^2)_{i=1,\ldots,n}\}$)
is Archimedean and therefore by Putinar's Positivstellensatz
\cite{Putinar1993positive}, $\v_j^+$ (resp $\v_j^-$) has a representing measure $\mu_j^+$ on $\om_j^+\cap \overline{\B}=\om_j^+$, (resp. $\mu_j^-$ on $\om_j^-\cap \overline{\B}=\om_j^-$), for each $j=1,\ldots,m$.
Moreover, by \eqref{conv-4}:
\begin{eqnarray*}
(n+\vert\alpha\vert) \, y_\alpha  & =&\displaystyle\sum_{j=1}^m L_{\v^\pm_j}(q_j\,\x^\alpha )\\
& =&\displaystyle\sum_{j=1}^m \int_{\om_j^+}\x^\alpha\,q_j\,d\mu_j^+-\int_{\om_j^-}\x^\alpha\,q_j\,d\mu_j^-,
\end{eqnarray*}
for all $\alpha\in\N^n$, and therefore $(\mu_j^\pm)$ is a feasible solution of the LP \eqref{lp-general-1}. 
In addition,
\[\rho\,\geq\,\lim_{k\to\infty}\sum_{j=1}^mv_{j,0}^{\pm,d_k}\,=\,
\sum_{j=1}^mv_{j,0}^{\pm}\,=\,\sum_{j=1}^m\left( \int_{\om_j^+}d\mu_j^++\int_{\om_j^-}d\mu_j^-\right),\]
which shows that $(\v_j^\pm)$ is an optimal solution of the LP \eqref{lp-general-1}. Finally, by Theorem \ref{th:signed},
$\mu_j^\pm=\phi_j^\pm$ for all $j=1,\ldots,m$, 
and \eqref{conv-4} holds for the whole sequence $(\v_j^{\pm,d})_{d\in\N}$,
and yields the desired result \eqref{th:general-sdp-1-1}.
\end{proof}

\bibliographystyle{plain}

\end{document}